\newtheorem{theorem}{Theorem}[section]
\newtheorem{lemma}[theorem]{Lemma}
\newtheorem{proposition}[theorem]{Proposition}
\newtheorem{definition}[theorem]{Definition}
\DeclareMathOperator{\argmin}{argmin}
\DeclareMathOperator{\dom}{dom}
\newcommand{\R}{\mathbb{R}}
\newcommand{\inner}[2]{\langle{#1},{#2}\rangle}
\newcommand{\Inner}[2]{\left\langle{#1},{#2}\right\rangle}
\newcommand{\cP}{{\cal{P}}}
\newcommand{\vgap}{\vspace{.1in}}
\newcommand{\bi}{\begin{itemize}}
\newcommand{\ei}{\end{itemize}}
\newcommand{\ba}{\begin{array}}
\newcommand{\ea}{\end{array}}
\begin{document}

\title{Iteration-complexity of a  Jacobi-type non-Euclidean  ADMM  for  multi-block linearly constrained  nonconvex programs}

\author{
    Jefferson G. Melo 
     \thanks{Instituto de Matem\'atica e Estat\'istica, Universidade Federal de Goi\'as, Campus II- Caixa
    Postal 131, CEP 74001-970, Goi\^ania-GO, Brazil. (E-mail:  {\tt jefferson@ufg.br}).  The work of this author was partially   supported by  CNPq Grants 406250/2013-8, 444134/2014-0 and 406975/2016-7.} 
    \and
    Renato D.C. Monteiro
    \thanks{School of Industrial and Systems
    Engineering, Georgia Institute of
    Technology, Atlanta, GA, 30332-0205.
    (email: {\tt monteiro@isye.gatech.edu}). The work of this author
    was partially supported by NSF Grant CMMI-1300221.}
}

\date{May 13, 2017}

\maketitle

\begin{abstract}
This paper establishes the  iteration-complexity of a Jacobi-type non-Euclidean proximal alternating direction method of multipliers (ADMM)
for solving multi-block linearly constrained nonconvex programs.  The subproblems of this ADMM
variant  can be solved in parallel and hence the method has great potential to solve large scale multi-block linearly constrained  nonconvex programs.
Moreover, our analysis allows the Lagrange multiplier to be updated with a relaxation parameter in the interval $(0,2)$.
\\

 \noindent  2000 Mathematics Subject Classification: 
  47J22, 49M27, 90C25, 90C26, 90C30, 90C60, 
  65K10.
\\
\\   
Key words: Jacobi multiblock ADMM, nonconvex program, iteration-complexity,  first-order methods, non-Euclidean distances.
 \end{abstract}

\pagestyle{plain}
\section{Introduction} \label{sec:int}
This paper considers  the following linearly constrained optimization problem 
\begin{equation} \label{prob:opt1}
\min  \left\{ \sum_{i=1}^p f_i(x_i) : \sum_{i=1}^pA_ix_i =b, \; x_i\in \R^{n_i}, i=1,\ldots,p\right\}
\end{equation}
where $f_i: \R^{n_i} \to (-\infty,\infty]$, $i=1,\ldots,p$, are proper lower semicontinuous functions,
$A_i\in\R^{d \times n_i}$, $i=1,\ldots, p$, and  $b \in \R^d$.

Optimization problems such as  \eqref{prob:opt1} appear in many important applications such as  distributed matrix factorization, distributed clustering, sparse zero variance discriminant analysis, tensor decomposition, matrix completion, and asset allocation (see, e.g., \cite{Ames2016,5712153,Liavas,Wen2013,Xu2012,Zhang2014}).
Recently, some variants  of the alternating direction method of multipliers (ADMM) have been successfully  applied to solve some instances of the previous problem despite the lack of convexity.

In this paper we analyze the Jacobi-type proximal ADMM for solving~\eqref{prob:opt1}, which recursively computes a sequence
$\{(x_1^k,\cdots,x_p^k,\lambda^k)\}$ as
\begin{align}\label{ADMMclass}
x_i^k = \argmin_{x_i} \left \{ {\mathcal L}_\beta(x^{k-1}_1,\ldots,x_{i-1}^{k-1},x_i,x_{i+1}^{k-1},\ldots,x_p^{k-1},\lambda^{k-1}) +(dw_i)(x_i,x_i^{k-1})\right\},\quad i=1,\ldots,p,\end{align}
$$
\lambda^k = \lambda^{k-1}-\theta\beta\left(\sum_{i=1}^pA_ix_i^k-b\right)
$$
where $\beta>0$ is a penalty parameter, $\theta>0$ is a relaxation parameter,  $dw_i$ is a Bregman distance,  and 
 \begin{equation}\label{augLagIntro}
{\mathcal L}_\beta(x_1,\ldots,x_p,\lambda):= \sum_{i=1}^p  f_i(x_i) - \left\langle \lambda,\sum_{i=1}^p  A_ix_i-b\right\rangle +\frac{\beta}{2}\left\|\sum_{i=1}^pA_ix_i-b\right\|^2
\end{equation}
is  the augmented Lagrangian  function for problem \eqref{prob:opt1}.
An important feature of  this ADMM variant is that the subproblems \eqref{ADMMclass}  can be solved in parallel and hence the method has great potential to solve large scale multi-block linearly constrained  nonconvex programs.
Under the assumption that  $A_p$ is full row rank and $f_p: \R^{n_p}\to \R$ is a differentiable function whose gradient is Lipschitz continuous, 
 we establish an ${\cal O}(\rho^{-2})$ iteration-complexity bound for the Jacobi-type ADMM \eqref{ADMMclass}
to obtain
 $(x_1,\ldots,x_p, \lambda,r_1,\ldots,r_{p-1})$ satisfying
\begin{align}
r_i &\in \partial f_i(x_1,\ldots,x_p) - A_i^*\lambda, \quad  \;i=1,\cdots,p-1,\\[3mm]
\max& \left\{\left\|\sum_{i=1}^pA_ix_i -b\right \| , \left\|r_1\right\|,\cdots,\|r_{p-1}\| , \left\|\nabla f_p(x_p) - A_p^* \lambda\right\|\right\} \le \rho
\end{align}
where $\partial f_i$ denotes the limiting subdifferential (see for example \cite{Mordu2006,VariaAna}).

We briefly discuss in this paragraph the development of ADMM in the convex setting.
The standard ADMM  (i.e., where $p=2$, $w_i \equiv 0$ for $i=1,\ldots,2$ and $x_2^k$ is obtained as above but with $x_1^{k-1}$ replaced by $x_1^k$)
was  introduced in \cite{0352.65034,0368.65053}
and its complexity analysis  was first carried out in \cite{monteiro2010iteration}.
Since then  several papers  have obtained iteration-complexity results  for  various ADMM variants (see for example \cite{Cui,MJR2,Gu2015,He2,HeLinear,Deng1,GADMM2015,MJR,Hager,Lin,LanADMM}).
Multiblock ADMM variants have also been extensively studied (see for example 
\cite{Han2012,Hong2017,Lin2015Siam,Lin2015ch,Lin2016,JacobiHe2016,WotaoinJacobi,paralleladm,LogJacobiADMM}). In particular, papers \cite{JacobiHe2016,WotaoinJacobi,paralleladm,LogJacobiADMM} study the convergence and/or complexity of Jacobi-type ADMM variants.

Recently, there have been a lot of interest on the study of ADMM variants for nonconvex problems (see, e.g., \cite{ADMM_KL,Hong2016,Jiang2016,SplitMet_NonConv,Multi-blockBregman,BregmanADMM,wotao2015,Stepzisenoncon,RMJ2017a,MBProxNonconvRenJeff,JacobiADMM1}).
Papers \cite{ADMM_KL,SplitMet_NonConv,Multi-blockBregman,BregmanADMM,wotao2015,Stepzisenoncon} establish
convergence of the generated sequence to a stationary point of \eqref{prob:opt1} under conditions which guarantee that a certain potential function associated with the augmented Lagrangian
\eqref{augLagIntro} satisfies the
Kurdyka-Lojasiewicz property.
However, these papers do not study the iteration complexity of the proximal  ADMM although their theoretical analysis are
generally half-way towards accomplishing such goal. Paper \cite{Hong2016}  analyzes the convergence of variants of the
ADMM for solving nonconvex consensus and sharing problems and establishes the iteration
complexity of  ADMM for the consensus problem.
Paper \cite{Jiang2016} studies the iteration-complexity of two linearized variants of the multiblock proximal ADMM
applied to a more general problem than \eqref{prob:opt1} where a coupling term is also present in its objective function.
Paper \cite{RMJ2017a} studies the iteration-complexity of a proximal ADMM  for the two block optimization problem, i.e., $p=2$,
and the relaxation parameter $\theta$ is arbitrarily chosen in the interval $(0,2)$, contrary to the previous related literature where this parameter is considered as one or at most $(\sqrt{5}+1)/2$.  Paper \cite{MBProxNonconvRenJeff}  analyzes the iteration-complexity of a multi-block proximal ADMM via a general linearization scheme.
Finally, while the authors were in the process of finalizing this paper, they have learned of the recent paper \cite{JacobiADMM1} which
studies the asymptotic convergence of a Jacobi-type linearized ADMM for solving non-convex problems.
The latter paper though does not deal with the issue of iteration-complexity and considers the case of $\theta=1$ only.

Our paper is organized as follows.
Subsection~\ref{sec:bas} contains some notation and basic results
used in the paper.
Section~\ref{ProximalADMM} describes our assumptions and contains two subsections. Subsection~\ref{NEPJ-ADMM}  introduces the concept of distance generating functions (and its corresponding Bregman distances) considered in this paper, and formally states the non-Euclidean Jacobi-type ADMM. Section~\ref{subset:convanalysis} is devoted to the convergence rate analysis of the latter method. Our main convergence rate result is in this subsection (Theorem \ref{maintheo}).
The appendix contains proofs of some results stated in the paper.

\subsection{Notation and basic results}
\label{sec:bas}

The domain of a function
 $f :\mathbb{R}^s\to (-\infty,\infty]$ is the set   $\dom f := \{x\in \R^{s} : f(x) < +\infty\}$. 
Moreover, $f$ is said to be proper if  $f(x) < \infty$ for some $x \in \R^{s}$.  

\begin{lemma}\label{le:147} 
Let $S \in \R^{n\times p}$ be a non-zero matrix and let $\sigma^+_S$ denote  the smallest positive eigenvalue of $SS^*$. Then, 
for every $u \in \mathbb{R}^p$, there holds
\[
\|\cP_{S^*}(u)\|\leq \frac{1}{\sqrt{\sigma^+_S}}\|Su\|.
\]
\end{lemma}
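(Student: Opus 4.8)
The plan is to use the fact that $\cP_{S^*}$ denotes the orthogonal projection onto the range of $S^*$, so that $\cP_{S^*}(u)$ lies in $\rg S^* = (\ker S)^\perp$. First I would write $v := \cP_{S^*}(u)$ and use the defining property of the projection: since $v \in \rg S^*$ and $u - v \perp \rg S^*$, we have $\inner{u}{v} = \inner{v}{v} = \norm{v}^2$. Because $v \in \rg S^*$, there exists $w \in \R^n$ with $v = S^* w$; moreover one may choose $w \in \rg S = (\ker S^*)^\perp$ (replacing $w$ by its projection onto $\rg S$ does not change $S^* w$). Then $\norm{v}^2 = \inner{u}{S^*w} = \inner{Su}{w}$, and by Cauchy--Schwarz, $\norm{v}^2 \le \norm{Su}\,\norm{w}$.

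The remaining step is to bound $\norm{w}$ in terms of $\norm{v} = \norm{S^*w}$. Since $w \in \rg S$, decompose $w$ in an orthonormal eigenbasis of $SS^*$; on the orthogonal complement of $\ker(SS^*) = \ker S^*$ the operator $SS^*$ has eigenvalues $\ge \sigma^+_S$, so $\norm{S^*w}^2 = \inner{SS^* w}{w} \ge \sigma^+_S \norm{w}^2$, i.e. $\norm{w} \le \norm{v}/\sqrt{\sigma^+_S}$. Combining with the Cauchy--Schwarz bound gives $\norm{v}^2 \le \norm{Su}\,\norm{v}/\sqrt{\sigma^+_S}$, and dividing by $\norm{v}$ (the inequality being trivial when $v = 0$) yields $\norm{\cP_{S^*}(u)} \le \norm{Su}/\sqrt{\sigma^+_S}$.

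I do not anticipate a serious obstacle here; the only point requiring a little care is the reduction to $w \in \rg S$, which is needed so that the eigenvalue bound $\sigma^+_S$ (the smallest \emph{positive} eigenvalue of $SS^*$, not its smallest eigenvalue, which is $0$) actually applies. An alternative, essentially equivalent route is to invoke the singular value decomposition of $S$ directly: writing $S = U \Sigma V^*$, the projection $\cP_{S^*}$ acts as the identity on the right-singular vectors corresponding to nonzero singular values and annihilates the rest, and the claimed inequality is then immediate from $\sqrt{\sigma^+_S}$ being the smallest nonzero singular value of $S$. I would likely present the eigenbasis argument since it keeps everything in terms of $SS^*$, matching the statement.
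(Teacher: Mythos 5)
Your proof is correct. The paper states this lemma without proof (it appears in the preliminaries as a basic fact, and the appendix only proves Lemma~\ref{lem:des_theta1uk} and Proposition~\ref{prop:decL}(b)), so there is no argument to compare against; your route --- writing $\cP_{S^*}(u)=S^*w$ with $w\in\rg S$, applying Cauchy--Schwarz to $\inner{Su}{w}$, and bounding $\|w\|$ via the spectral lower bound $\inner{SS^*w}{w}\ge \sigma^+_S\|w\|^2$ on $(\ker S^*)^\perp$ --- is the standard one and all steps check out, including the care you take to place $w$ in $\rg S$ so that the smallest \emph{positive} eigenvalue applies.
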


We next recall some definitions and results of  subdifferential calculus~\cite{Mordu2006,VariaAna}.

\begin{definition}
Let $h: \R^{s} \to(-\infty,\infty]$ be a proper lower semi-continuous function.
\begin{enumerate}
\item[(i)] The Fr\'echet subdifferential of $h$ at $x\in \dom h$, denoted by $\hat{\partial} h(x)$, is the set of all elements $u \in \R^{s}$ satisfying
$$\liminf_{y\neq x\; y\to x} \frac{h(y)-h(x)-\inner{u}{y-x}}{\|y-x\|}\geq0.$$
When $x\notin \dom h$, we set $\hat{\partial} h(x)=\emptyset$.
\item[(ii)] The limiting subdifferential  of $h$ at $x \in dom\, h$, denoted by ${\partial} h(x)$, is defined as 
\[
\partial h (x)=\{u\in \R^{s}:\exists \,  x^k \to x, h(x^k)\to h(x), u^k \in \hat{\partial} h(x^k), \;\mbox{with}\; u^k \to u \}.
\]
\item[(iii)]  A critical (or stationary) point of $h$ is a point $x \in \dom h$ satisfying $0\in \partial h(x)$.
\end{enumerate}
\end{definition}  
The following result presents some properties of the limiting subdifferential.
 
\begin{proposition}\label{prop:subdiffcalc} Let $h: \R^{s} \to (-\infty,\infty]$ be a proper lower semi-continuous function.
\begin{enumerate}
\item[(a)]  If $x \in R^s$ is a local minimizer of $h$, then $0\in  \partial h(x)$;
\item[(b)] If $g: \R^{s} \to \R$ is a continuously differentiable function, then $\partial(h + g)(x) = \partial h(x) + \nabla g(x)$.
\end{enumerate}
\end{proposition}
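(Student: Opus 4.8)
The plan is to treat the two items separately, obtaining (b) by first proving the analogous sum rule for the Fr\'echet subdifferential and then passing to the limit.

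For item (a), I would argue directly from the definition of $\hat\partial h$. If $x$ is a local minimizer of $h$, then $h(y)\ge h(x)$ for all $y$ in some neighborhood of $x$, so taking $u=0$ the quotient $[h(y)-h(x)-\inner{0}{y-x}]/\norm{y-x} = [h(y)-h(x)]/\norm{y-x}$ is nonnegative for $y\ne x$ near $x$, whence its $\liminf$ is $\ge 0$ and $0\in\hat\partial h(x)$. Then I would observe that $\hat\partial h(x)\subseteq\partial h(x)$: given $u\in\hat\partial h(x)$, the constant sequence $x^k\equiv x$ satisfies $x^k\to x$, $h(x^k)\to h(x)$, $u\in\hat\partial h(x^k)$, so $u\in\partial h(x)$. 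Combining these gives $0\in\partial h(x)$.

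For item (b), the first step is the Fr\'echet sum rule $\hat\partial(h+g)(x)=\hat\partial h(x)+\nabla g(x)$. Given $u\in\hat\partial(h+g)(x)$, I would substitute the first-order expansion $g(y)=g(x)+\inner{\nabla g(x)}{y-x}+o(\norm{y-x})$ into the quotient defining $\hat\partial(h+g)(x)$; since $o(\norm{y-x})/\norm{y-x}\to 0$, the $\liminf$ condition becomes exactly
\[
\liminf_{y\ne x,\,y\to x}\frac{h(y)-h(x)-\inner{u-\nabla g(x)}{y-x}}{\norm{y-x}}\ge 0,
\]
i.e.\ $u-\nabla g(x)\in\hat\partial h(x)$. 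The reverse inclusion follows by symmetry: apply the same reasoning to the lsc function $h+g$ and the $C^1$ function $-g$, using $(h+g)+(-g)=h$. The second step is to promote this to the limiting subdifferential. Let $u\in\partial(h+g)(x)$ and pick $x^k\to x$ with $(h+g)(x^k)\to(h+g)(x)$ and $u^k\in\hat\partial(h+g)(x^k)$, $u^k\to u$. By the Fr\'echet sum rule at each $x^k$, $u^k-\nabla g(x^k)\in\hat\partial h(x^k)$. Continuity of $g$ yields $h(x^k)=(h+g)(x^k)-g(x^k)\to h(x)$, and continuity of $\nabla g$ yields $u^k-\nabla g(x^k)\to u-\nabla g(x)$; hence $u-\nabla g(x)\in\partial h(x)$, giving $\partial(h+g)(x)\subseteq\partial h(x)+\nabla g(x)$. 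Applying this inclusion to $h+g$ and $-g$ gives the reverse containment, completing the proof.

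There is no substantial obstacle here; the only points demanding care are the manipulation of the $\liminf$ in the Fr\'echet sum rule — in particular checking that the $o(\norm{y-x})$ remainder does not affect the inequality — and the bookkeeping of the symmetry argument used for each reverse inclusion. The passage from $\hat\partial$ to $\partial$ is then immediate from the continuity of $g$ and $\nabla g$.
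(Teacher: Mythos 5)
Your argument is correct. Note, however, that the paper does not prove Proposition~\ref{prop:subdiffcalc} at all: it is stated as a known fact with a pointer to the standard references \cite{Mordu2006,VariaAna}, so there is no in-paper proof to compare against. Your self-contained derivation is the standard one --- item (a) from the definition of $\hat\partial h$ plus the inclusion $\hat\partial h(x)\subseteq\partial h(x)$ via a constant sequence, and item (b) by first establishing the Fr\'echet sum rule through the first-order expansion of $g$ (the $o(\|y-x\|)$ term indeed drops out of the $\liminf$ since it has limit zero) and then passing to the limiting subdifferential using continuity of $g$ and $\nabla g$, with each reverse inclusion obtained by the symmetry trick with $-g$. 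All steps are sound.
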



\section{Jacobi-type non-Euclidean proximal ADMM and its convergence rate}\label{ProximalADMM}
 We start by recalling  the definition of  critical points of \eqref{prob:opt1}.
 \begin{definition}\label{def:criticalpoint}
 An element  $ (x_1^*,\ldots,x_p^*, \lambda^*) \in \R^{n_1}\times\ldots \times \R^{n_p}\times \R^d$ is a critical point of problem~\eqref{prob:opt1}
if
\[ 0\in  \partial f_i(x_i^*)-A_i^*\lambda^*,\quad i=1,\ldots,p, 
\quad  \sum_{i=1}^pA_ix_i^*=b.
\]
\end{definition}
Under some mild conditions, it can be shown that if $(x_1^*,\ldots,x_p^*)$ is a global minimum of \eqref{prob:opt1}, then there exists
$\lambda^*$ such that $ (x_1^*,\ldots,x_p^*, \lambda^*)$ is a critical point of \eqref{prob:opt1}.

The augmented Lagrangian associated with problem \eqref{prob:opt1} and with penalty parameter $\beta>0$  is defined as
\begin{equation}\label{lagrangian2}
{\mathcal L}_\beta(x_1,\ldots,x_p,\lambda):=\sum_{i=1}^pf_i(x_i)-\left\langle \lambda,\sum_{i=1}^pA_ix_i-b\right\rangle+\frac{\beta}{2}\left\|\sum_{i=1}^pA_ix_i-b\right\|^2.
\end{equation}

We assume that  problem~\eqref{prob:opt1} satisfies the following  set of conditions:
\begin{itemize}
\item[{\bf(A0)}] The functions $f_i$, $i=1,\ldots,p-1$,  are   proper lower semicontinuous;
\item[{\bf(A1)}] $A_p \ne 0$ and $ {\rm Im} (A_p) \supset \{b\} \cup {\rm Im} (A_1) \cup\ldots \cup {\rm Im} (A_{p-1})$;

\item[{\bf(A2)}]
$f_p: \R^{n_p} \to \R$ is differentiable with gradient $L_p-$Lipschitz continuous.

\item[{\bf(A3)}] there exists $\bar \beta \ge 0$ such that $v(\bar \beta)>-\infty$ where
\[
v(\beta) :=\inf_{(x_1,\ldots,x_p)} \left \{\sum_{i=1}^pf_i(x_i) +\frac{\beta}{2} \left \|\sum_{i=1}^pA_ix_i -b \right\|^2 \right\} \quad \forall \beta \in \R. \label{assump:beta_inf}
\]
\end{itemize}

\subsection{The non-Euclidean proximal Jacobi ADMM}\label{NEPJ-ADMM}

In this subsection, we introduce a class of distance generating functions (and its corresponding Bregman distances)
which is suitable for our study. We also formally describe the non-Euclidean proximal Jacobi ADMM for solving problem~\eqref{prob:opt1}.

\begin{definition}\label{defw0}
For given set $Z\subset \R^s$ and scalars $m \le M$, we let $\mathcal{D}_Z(m,M)$ denote the class of real-valued
functions  $w$ which are differentiable on $Z$
and satisfy
\begin{align}\label{strongly}
w(z')-w(z) - \inner{\nabla w(z)}{z'-z} \geq \frac{{m}}{2}\|z-z'\|^2 \quad \forall  z,z' \in  Z,\\[2mm]
\label{a1}
\|\nabla w(z)-\nabla w(z')\|\leq {M}\|z-z'\| \quad \forall z, z' \in Z.
\end{align}
A function $w \in \mathcal{D}_Z(m,M)$ with $m\geq0$  is referred to as a distance generating function and its associated
Bregman distance $dw: \R^s \times Z \to \mathbb{R}$ is defined as
\begin{equation}\label{def_d}
(dw)(z';z) := w(z')-w(z)-\langle \nabla w(z),z'-z\rangle  \quad \forall (z', z) \in \R^s \times Z.
\end{equation}
\end{definition}

For every $z \in Z$,  the function $(dw)(\,\cdot\,;z)$ will be denoted by $(dw)_{z}$ so that
\[
(dw)_{z}(z')=(dw)(z';z) \quad \forall (z',z) \in \R^s \times Z.
\]
Clearly,
\begin{align}
\nabla (dw)_{z}(z') &= - \nabla (dw)_{z'}(z) = \nabla w(z') - \nabla w(z) \quad \forall z, z' \in Z, \label{grad-d}
\end{align}


We now state the non-Euclidean proximal Jacobi ADMM based on the class of distance generating functions introduced in
Definition \ref{defw0}.
In its statement and in some technical results, we denote the block of variables $(x_1,\ldots,x_{i-1})$ simply by 
$x_{<i}$ and the block of variables $(x_{i+1},\ldots,x_p)$ simply by 
$x_{>i}$. Hence, the whole  vector $(x_1,\ldots,x_p)$ can also be denoted as
$(x_{<i},x_i,x_{>i})$ when there is a need to emphasize the $i$-th  block. For convenience, we also extend the above for notation for $i=1$ and $i=p$. Hence,
$(x_{<1},x_1,x_{>1})=(x_{<p},x_p,x_{>p})=(x_1,\ldots,x_p)$.
 \\
\hrule
\noindent
\\
{\bf Non-Euclidean Proximal Jacobi ADMM (NEPJ-ADMM)}
\\
\hrule
\begin{itemize}
\item[(0)]Define $Z_i := \dom f_i$ for $i=1,\ldots,p,$  and let $\bar{\beta}$ be as in {\bf (A3)}. Let
 an initial point $(x_1^0,\ldots,x_p^0,\lambda^0) \in Z_1 \times \ldots \times Z_p\times  \R^d$.
Choose scalars $\alpha>0$,  $\beta\geq \bar{\beta}$,  $M_i \ge m_i>0$, $i=1,\ldots,p$, and  a stepsize parameter $\theta\in (0,2)$  such that   
\begin{align}\label{assump:delta1}
 \delta_i&:=\frac{m_i}{4}-\left(\frac{p-2+\alpha}{2}+\frac{2\gamma_\theta(p+1)}{\sigma_{A_p}^+} \| A_p^*\|^2\right) \beta\max_{1\leq l\leq p-1}\|A_l\|^2 >0, \quad i=1,\ldots,p-1\nonumber\\[5mm]
\delta_p&:=\frac{m_p}{4}-\left(\frac{\beta(p-1)\|A_p\|^2}{2\alpha}+\frac{ \gamma_\theta(p+1) (L_p^2 + 2M_p^2 ) }{\beta\sigma_{A_p}^+}\right)>0
\end{align}
where $\sigma_{A_p}$ (resp., $\sigma_{A_p}^+$) denotes the smallest  eigenvalue  (resp., positive eigenvalue) of $A_p^*A_p$, and $\gamma_\theta$ is given by
\begin{equation}\label{def:gamma}
\gamma_\theta:=\frac{\theta}{(1-|\theta-1|)^2}.
\end{equation}
Set $k=1$ and go to step 1.
\item[(1)]  For each $i=1,\ldots,p$, choose $w_i^k\in {\mathcal{D}_{Z_i}(m_i,M_i)}$ and compute an optimal solution $x_i^k \in \R^{n_i}$  of
\begin{equation} \label{sub:s}
\min_{x_i\in \R^{n_i}} \left \{ {\mathcal L}_\beta(x_{< i}^{k-1},x_i,x_{>i}^{k-1},\lambda^{k-1})+(d w^k_i)_{x_i^{k-1}}(x_i)\right\}.
\end{equation}
\item[(2)] Set 
\begin{equation}\label{eq:lambda}
\lambda^k = \lambda^{k-1}-\theta\beta\left[\sum_{i=1}^pA_ix_i^k-b\right],
\end{equation}
 $k \leftarrow k+1$, and go to step~(1).
\end{itemize}
{\bf end}
\\
\hrule
\vgap

Some comments about the NEPJ-ADMM  are in order. First, it is always possible to choose the constants $m_i$, i=1,\ldots,p,  sufficiently large  so as to
guarantee that $\delta_i$, i=1,\ldots,p, are strictly positive. 
Second, one of the main features of NEPJ-ADMM is that its subproblems \eqref{sub:s} are completely independent of one another.
As a result, they can all be solved in parallel which shows the potential of NEPJ-ADMM as a suitable ADMM variant to solve 
 large instance of \eqref{prob:opt1}. 
Third, as in the papers \cite{RMJ2017a,MBProxNonconvRenJeff}, NEPJ-ADMM allows the choice of a relaxation parameter $\theta \in (0,2)$.

 \subsection{Convergence Rate Analysis of the NEPJ-ADMM}\label{subset:convanalysis}
 This subsection is dedicated to the convergence rate analysis of the NEPJ-ADMM.

We first present some technical lemmas which are useful to prove our main result (Theorem~\ref{maintheo}).
To simplify the notation,  we denote by  $x^k$ the vector $(x_1^k,\ldots,x_p^k)$ generated by the NEPJ-ADMM.

 \begin{lemma} \label{pr:aux}
Consider the sequence $\{(x^k,\lambda^k)\}$  generated by the NEPJ-ADMM.  For every $k \ge 1$, define
\begin{equation}\label{def:lambdahat}
\hat\lambda^k:=\lambda^{k-1}-\beta\left(\sum_{i=1}^pA_ix_i^k-b\right)
\end{equation}
and
\begin{equation}
R_i^k:= - \sum_{j=1,j\neq i}^p\beta A_i^*A_j\Delta x_j^k+ \Delta w_i^k,\qquad i=1,\ldots,p, \label{def:R1R2}
\end{equation}
where
\begin{equation} \label{eq:Delta-def}
\Delta x_i^k := x_i^k - x_i^{k-1},\qquad  \Delta w_i^k    := \nabla w_i^k   (x_i^k)-\nabla w_i^k   (x_i^{k-1}),\qquad  i=1,\ldots,p.
\end{equation}
Then, for every $k \ge 1$, we have:
\begin{align}
 0&\in \partial f_i(x_i^k)-A_i^* \hat \lambda^k +R_{i}^k\quad i=1,\ldots,p\label{aux.211}\\[2mm]
0&=  \left[\sum_{i=1}^pA_ix_i^k-b \right]+\frac{1}{\theta\beta}\Delta \lambda^k\label{aux.32}
\end{align}
where $\Delta \lambda^k:=\lambda^k-\lambda^{k-1}$.
\end{lemma}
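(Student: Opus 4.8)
The plan is to derive \eqref{aux.211} from the first-order optimality condition for each Jacobi subproblem \eqref{sub:s}, and to read off \eqref{aux.32} directly from the multiplier update \eqref{eq:lambda}.

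First, fix $k\ge 1$ and $i\in\{1,\ldots,p\}$ and isolate the dependence on the block $x_i$ of the objective in \eqref{sub:s}. Expanding ${\mathcal L}_\beta$ via \eqref{lagrangian2} and dropping the terms not involving $x_i$ (all of which are constants at this subproblem), that objective equals $f_i(x_i)+g_i(x_i)$ up to an additive constant, where
\[
g_i(x_i):=-\inner{\lambda^{k-1}}{A_ix_i}+\frac{\beta}{2}\Norm{A_ix_i+\sum_{j\neq i}A_jx_j^{k-1}-b}^2+(dw_i^k)_{x_i^{k-1}}(x_i).
\]
Since $w_i^k\in{\mathcal D}_{Z_i}(m_i,M_i)$, Definition~\ref{defw0} makes the Bregman term $(dw_i^k)_{x_i^{k-1}}$ differentiable, so $g_i$ is continuously differentiable. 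Because $x_i^k$ is an optimal solution of \eqref{sub:s}, hence a local minimizer of $f_i+g_i$, Proposition~\ref{prop:subdiffcalc}(a)--(b) yields $0\in\partial f_i(x_i^k)+\nabla g_i(x_i^k)$; for $i=p$ this reads $0=\nabla f_p(x_p^k)+\nabla g_p(x_p^k)$, which is the same inclusion since $\partial f_p(x_p^k)=\{\nabla f_p(x_p^k)\}$ by differentiability of $f_p$.

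Next, I would compute $\nabla g_i(x_i^k)$. Using \eqref{grad-d} on the Bregman term,
\[
\nabla g_i(x_i^k)=-A_i^*\lambda^{k-1}+\beta A_i^*\Big(A_ix_i^k+\sum_{j\neq i}A_jx_j^{k-1}-b\Big)+\Delta w_i^k .
\]
Rewriting $A_ix_i^k+\sum_{j\neq i}A_jx_j^{k-1}-b=\big(\sum_{j=1}^pA_jx_j^k-b\big)-\sum_{j\neq i}A_j\Delta x_j^k$ and using the definition \eqref{def:lambdahat} of $\hat\lambda^k$, namely $\beta\big(\sum_{j=1}^pA_jx_j^k-b\big)=\lambda^{k-1}-\hat\lambda^k$, this becomes
\[
\nabla g_i(x_i^k)=-A_i^*\hat\lambda^k-\sum_{j\neq i}\beta A_i^*A_j\Delta x_j^k+\Delta w_i^k=-A_i^*\hat\lambda^k+R_i^k,
\]
by the definition \eqref{def:R1R2} of $R_i^k$. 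Combining with the inclusion from the previous step gives precisely \eqref{aux.211}. Finally, \eqref{aux.32} follows at once from \eqref{eq:lambda}: that step says $\Delta\lambda^k=\lambda^k-\lambda^{k-1}=-\theta\beta\big(\sum_{i=1}^pA_ix_i^k-b\big)$, so dividing by $\theta\beta$ and rearranging gives $0=\big(\sum_{i=1}^pA_ix_i^k-b\big)+\tfrac{1}{\theta\beta}\Delta\lambda^k$.

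I do not expect any real obstacle here; the proof is essentially bookkeeping. The two points that deserve a little care are (i) verifying that the $x_i$-dependent part of the subproblem objective splits as $f_i$ plus a $C^1$ function, so that the sum rule in Proposition~\ref{prop:subdiffcalc}(b) applies — this relies on differentiability of the Bregman distance guaranteed by Definition~\ref{defw0}; and (ii) the elementary ``add and subtract $\sum_{j\neq i}A_j\Delta x_j^k$'' manipulation, together with the definition of $\hat\lambda^k$, that turns the penalty-gradient term into $-A_i^*\hat\lambda^k+R_i^k$.
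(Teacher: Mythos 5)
Your proof is correct and follows essentially the same route as the paper: both derive \eqref{aux.211} from the first-order optimality condition of the subproblem \eqref{sub:s} (the paper states this condition directly, while you spell out the splitting into $f_i$ plus a $C^1$ part and the gradient computation) and then substitute the definitions of $\hat\lambda^k$ and $R_i^k$, with \eqref{aux.32} read off from the update \eqref{eq:lambda}. No gaps.
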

\begin{proof}
The optimality conditions (see Proposition~\ref{prop:subdiffcalc}) for \eqref{sub:s} imply that
\[
0 \in \partial f_i(x_i^k)- A_i^*\left[{\lambda}_{k-1}-\beta\left(A_ix_i^k+\sum_{j=1,j\neq i}^pA_jx_j^{k-1}-b\right)\right]+\Delta w_i^k,\quad i=1,\ldots,p.
\]
This  relation combined with \eqref{def:lambdahat} and \eqref{def:R1R2} immediately yield \eqref{aux.211}.
Relation \eqref{aux.32} follows directly from \eqref{eq:lambda}.
\end{proof}
Next result presents a recursive relation involving  the displacements $\Delta \lambda^k$ and $\Delta \lambda^{k-1}$.
\begin{lemma} \label{pr:aux-new}
Consider the sequence $\{(x^k,\lambda^k)\}$ generated by the NEPJ-ADMM and define
 \begin{equation}\label{def:p0}
R_p^0= A_p^*\lambda^0 -\nabla f_p(x_p^0),\quad  \Delta \lambda^0=0.
 \end{equation}
Then, for every $k \ge 1$, we have
\begin{equation}\label{eq:B*lambda}
 A_p^*\Delta\lambda^k=(1-\theta)A_p^*\Delta\lambda^{k-1}+\theta u^k, 
 \end{equation}
where 
\begin{equation}\label{def:uk}
u^k:=\Delta f_p^k +  \Delta R_p^k , \quad \Delta f_p^k:= \nabla f_p^k(x_p^k)-\nabla f_p^k(x_p^{k-1}), \quad \Delta R_p^k:= R_p^k  - R_p^{k-1}\quad \forall k \ge 1, 
\end{equation} 
 $ \Delta \lambda^k$ and $R_p^k$ are as in Lemma~\ref{pr:aux}.
\end{lemma}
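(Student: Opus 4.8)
The plan is to obtain \eqref{eq:B*lambda} by specializing the first-order optimality relation \eqref{aux.211} to the last block $i=p$ and then differencing it across consecutive iterations. Since $f_p$ is differentiable by {\bf (A2)}, we have $\partial f_p(x_p^k)=\{\nabla f_p(x_p^k)\}$, so \eqref{aux.211} with $i=p$ becomes the exact identity
\[
A_p^*\hat\lambda^k=\nabla f_p(x_p^k)+R_p^k,\qquad k\ge 1.
\]
Moreover, with the convention $\hat\lambda^0:=\lambda^0$ (consistent with $\Delta\lambda^0=0$ via \eqref{def:lambdahat}), the definition $R_p^0=A_p^*\lambda^0-\nabla f_p(x_p^0)$ in \eqref{def:p0} makes this same identity hold at $k=0$ as well. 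Subtracting the identity at $k-1$ from the one at $k$ and invoking the definition \eqref{def:uk} of $u^k$, $\Delta f_p^k$ and $\Delta R_p^k$ then yields
\[
A_p^*\bigl(\hat\lambda^k-\hat\lambda^{k-1}\bigr)=\Delta f_p^k+\Delta R_p^k=u^k,\qquad k\ge 1.
\]

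The next step is to re-express $\hat\lambda^k$ in terms of the Lagrange-multiplier iterates. Combining \eqref{def:lambdahat} with \eqref{aux.32} (equivalently with the update \eqref{eq:lambda}), the residual $\sum_{i=1}^pA_ix_i^k-b$ equals $-\tfrac1{\theta\beta}\Delta\lambda^k$, hence $\hat\lambda^k=\lambda^{k-1}+\tfrac1\theta\Delta\lambda^k$. For $k\ge 2$ I would substitute this expression, together with its analogue at index $k-1$, into the displayed difference identity; using $\lambda^{k-1}-\lambda^{k-2}=\Delta\lambda^{k-1}$ and multiplying through by $\theta$ gives $A_p^*\Delta\lambda^k+(\theta-1)A_p^*\Delta\lambda^{k-1}=\theta u^k$, which is precisely \eqref{eq:B*lambda}. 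For the base case $k=1$, the convention $\hat\lambda^0=\lambda^0$ gives $\hat\lambda^1-\hat\lambda^0=\tfrac1\theta\Delta\lambda^1$, so $A_p^*\Delta\lambda^1=\theta u^1$, which coincides with the right-hand side of \eqref{eq:B*lambda} since $\Delta\lambda^0=0$.

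There is no genuinely hard analytic step here: the argument is an algebraic rearrangement of the optimality conditions already recorded in Lemma~\ref{pr:aux} together with the multiplier update \eqref{eq:lambda}. The only point deserving a little care is the bookkeeping at $k=1$ — one must check that the artificial definitions in \eqref{def:p0} are exactly those that make the identity $A_p^*\hat\lambda^k=\nabla f_p(x_p^k)+R_p^k$ valid at $k=0$ and the $\theta$-weighted recursion consistent there, so that \eqref{eq:B*lambda} holds uniformly for all $k\ge 1$ rather than only for $k\ge 2$.
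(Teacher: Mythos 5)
Your proof is correct and takes essentially the same route as the paper's: both rest on the optimality identity $A_p^*\hat\lambda^k=\nabla f_p(x_p^k)+R_p^k$ from Lemma~\ref{pr:aux}, the relation $\hat\lambda^k=\lambda^{k-1}+\tfrac{1}{\theta}\Delta\lambda^k$ (equivalently $\lambda^k=(1-\theta)\lambda^{k-1}+\theta\hat\lambda^k$), and a differencing across consecutive iterations, with the conventions in \eqref{def:p0} supplying the base case $k=1$. The only distinction is the order in which you difference and substitute, which is immaterial.
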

\begin{proof}
From \eqref{def:lambdahat} and \eqref{aux.32}, we obtain the following relation
$$
\lambda^k=(1-\theta)\lambda^{k-1}+\theta \hat\lambda^k,\quad \forall k \ge 1.
$$
Using this relation and  \eqref{aux.211} with $i=p$, we have
\begin{equation}\label{eq:auxlem1}
 A_p^* \lambda^k=(1-\theta)A_p^*\lambda^{k-1}+ \theta [ \nabla f_p(x_p^k) +  R_p^k ], \quad \forall k \ge 1.
\end{equation}
Hence, in view of \eqref{def:uk},  relation \eqref{eq:B*lambda} holds for every $k \ge 2$.
Now, note that \eqref{def:p0} is equivalent to  $\nabla f_p(x_p^0)+R_p^0=   A_p^*\lambda^0$. This relation combined with
\eqref{def:uk} and \eqref{eq:auxlem1}, both with $k=1$, yield
\begin{align*}
A_p^* \Delta \lambda^1 &= - \theta A_p^*\lambda^0 +  \theta \left [\nabla f_p(y^1) + R_p^1\right ]\\[2mm]
&= - \theta A_p^*\lambda^0 +  \theta \left[\nabla f_p(x_p^0) + R_p^0+u^1\right]\\[2mm]
&=- \theta A_p^*\lambda^0 +\theta A_p^*\lambda^0+\theta u^1=\theta u^1.
\end{align*}
Hence,  in view of $\Delta \lambda^0=0$,  relation \eqref{eq:B*lambda} also holds for $k=1$.
\end{proof}

Next we consider an auxiliary result to be used to compare consecutive terms of the sequence  $\{{\mathcal L}_\beta(x^k,\lambda^k)\}$. See the comments immediately before the NEPJ-ADMM about the notation used hereafter.
\begin{lemma}\label{LastauxLem} For every $y^0=(y^0_1,\ldots,y^0_p), \ y=(y_1,\ldots,y_p) \in
 \dom f_1 \times\ldots\times \dom f_p$,  $\lambda \in \R^d$
and  $i=2,\ldots,p$, we have
\begin{align*}
{\mathcal L}_\beta (y_{<i},y_i,y_{>i}^0,\lambda) - {\mathcal L}_\beta (y_{<i},y_i^0,y_{>i}^0,\lambda) &=
{\mathcal L}_\beta (y_{< i}^0,y_i,y_{>i}^0,\lambda) - {\mathcal L}_\beta (y_{<i}^0,y_i^0,y_{>i}^0,\lambda) \\
& \ \ + \beta \sum_{j=1}^{i-1} \langle A_i\Delta y_i,A_j\Delta y_j\rangle.
\end{align*}
\end{lemma}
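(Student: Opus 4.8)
The plan is to strip the separable part $\sum_{j}f_j(x_j)$ off the augmented Lagrangian and to exploit the fact that the remaining part is a quadratic form in $(x_1,\ldots,x_p)$. Write ${\mathcal L}_\beta(x_1,\ldots,x_p,\lambda)=\sum_{j=1}^{p}f_j(x_j)+Q(x_1,\ldots,x_p)$, where
\[
Q(x_1,\ldots,x_p):=-\Big\langle\lambda,\sum_{j=1}^{p}A_jx_j-b\Big\rangle+\frac{\beta}{2}\Big\|\sum_{j=1}^{p}A_jx_j-b\Big\|^2 ,
\]
and observe that $Q$ is a (multivariate) quadratic function whose Hessian $H$ is the constant block operator with $(j,k)$-block equal to $\beta A_j^*A_k$. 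Throughout set $\Delta y_j:=y_j-y_j^0$.

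First I would rewrite the claimed identity as the statement that the alternating sum $\Phi(P_1)-\Phi(P_2)-\Phi(P_3)+\Phi(P_4)$ equals $\beta\sum_{j=1}^{i-1}\langle A_i\Delta y_i,A_j\Delta y_j\rangle$, where $\Phi:={\mathcal L}_\beta(\cdot,\lambda)$ and $P_1=(y_{<i},y_i,y_{>i}^0)$, $P_2=(y_{<i},y_i^0,y_{>i}^0)$, $P_3=(y_{<i}^0,y_i,y_{>i}^0)$, $P_4=(y_{<i}^0,y_i^0,y_{>i}^0)$. Since all four points agree in the blocks indexed by $j>i$ and each is obtained from $P_4$ by replacing the blocks $j<i$ and/or the block $i$ by the corresponding components of $y$, I can write $P_4=z$, $P_3=z+v$, $P_2=z+u$, $P_1=z+u+v$, where $u$ is the vector whose $j$-th block is $\Delta y_j$ for $j<i$ and $0$ otherwise, and $v$ is the vector whose $i$-th block is $\Delta y_i$ and all other blocks are $0$.

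The separable part contributes $f_i(y_i)-f_i(y_i^0)$ to $\Phi(P_1)-\Phi(P_2)$ and the identical amount to $\Phi(P_3)-\Phi(P_4)$ (the contributions of the blocks $j<i$ and $j>i$ either coincide or cancel), hence it drops out of the alternating sum; only $Q$ survives. Now I would invoke the elementary four-point identity valid for any quadratic function, namely $Q(z+u+v)-Q(z+u)-Q(z+v)+Q(z)=\langle Hu,v\rangle$ (a one-line check after expanding each term and using the symmetry of $H$). Since $v$ is supported on the $i$-th block and $u$ on the blocks $j<i$, this inner product reduces to $\sum_{j=1}^{i-1}\langle\beta A_i^*A_j\Delta y_j,\Delta y_i\rangle=\beta\sum_{j=1}^{i-1}\langle A_i\Delta y_i,A_j\Delta y_j\rangle$, which is exactly the desired right-hand side.

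If one prefers a computation that does not mention the Hessian, the same result follows by a direct expansion: with $s_0:=\sum_{j>i}A_jy_j^0-b$ fixed and $t:=\sum_{j<i}A_jy_j$, $t_0:=\sum_{j<i}A_jy_j^0$ (so $t-t_0=\sum_{j<i}A_j\Delta y_j$), write out each of the four values $\Phi(P_\ell)$ explicitly and expand the penalty term via $\|a\|^2-\|b\|^2=\langle a-b,a+b\rangle$. In the alternating combination, the $f_j$ terms, the terms linear in $\lambda$, and the terms depending only on $A_iy_i$ and $A_iy_i^0$ all cancel, leaving precisely $\beta\langle A_i\Delta y_i,t-t_0\rangle=\beta\sum_{j=1}^{i-1}\langle A_i\Delta y_i,A_j\Delta y_j\rangle$. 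The statement is essentially a bookkeeping exercise, and the only point requiring care — the sole obstacle — is verifying that the asserted cancellations indeed occur; no genuine analytic difficulty arises.
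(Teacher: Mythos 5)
Your proposal is correct and takes essentially the same route as the paper: the paper observes that the map $y_{<i}\mapsto {\mathcal L}_\beta(y_{<i},y_i,y_{>i}^0,\lambda)-{\mathcal L}_\beta(y_{<i},y_i^0,y_{>i}^0,\lambda)$ is affine with gradient $\beta[A_1\cdots A_{i-1}]^*A_i\Delta y_i$, which is precisely your four-point identity $Q(z+u+v)-Q(z+u)-Q(z+v)+Q(z)=\langle Hu,v\rangle$ for the quadratic part, specialized to $u$ supported on the blocks $j<i$ and $v$ on block $i$. The cancellation of the separable terms and the identification of the cross term are handled identically, so there is nothing to add.
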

\begin{proof}
It is easy to see thatf the gradient of the function
\begin{equation}\label{eq:auxquadfunc}
y_{<i} \mapsto {\mathcal L}_\beta (y_{<i},y_i,y_{>i}^0) - {\mathcal L}_\beta (y_{<i},y_i^0,y_{>i}^0) 
\end{equation}
is given by
\[
\beta [ A_1 \cdots A_{i-1} ]^* A_i \Delta y_i
\]
and  its Hessian equal to zero everywhere in $\dom f_1 \times \ldots \times \dom f_{i-1}$. 
Hence, the  function given in \eqref{eq:auxquadfunc} is affine. The conclusion of the lemma now follows by noting that
\[
\Inner{ \left[ A_1 \cdots A_{i-1} \right]^* A_i \Delta y_i}{\Delta y_{<i}} =
\sum_{j=1}^{i-1} \langle A_i\Delta y_i,A_j\Delta y_j\rangle.
\]
\end{proof}

The next result  compares consecutive terms of the sequence  $\{{\mathcal L}_\beta(x^k,\lambda^k)\}$.

\begin{lemma} \label{lem:auxdecL} For every $k\geq 1$, we have 
\begin{equation*}
{\mathcal L}_\beta(x^k,\lambda^k)- {\mathcal L}_\beta(x^{k-1},\lambda^{k-1})\leq \sum_{1\le j<i\leq p}\beta\langle A_i\Delta x_i^k,A_j\Delta x_j^ k\rangle -\sum_{i=1}^{p}\frac{m_i}{2}\|\Delta x_i^k\|^2+\frac{1}{\theta\beta}\|\Delta \lambda^k\|^2.
\end{equation*}
\end{lemma}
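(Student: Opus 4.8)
The plan is to decompose the change in the augmented Lagrangian over one iteration into two pieces, a change in the primal variables (with $\lambda$ held fixed at $\lambda^{k-1}$) and a change in $\lambda$ (with $x^k$ held fixed), and then estimate each piece. Writing
\[
{\mathcal L}_\beta(x^k,\lambda^k)-{\mathcal L}_\beta(x^{k-1},\lambda^{k-1}) = \left[{\mathcal L}_\beta(x^k,\lambda^{k-1})-{\mathcal L}_\beta(x^{k-1},\lambda^{k-1})\right] + \left[{\mathcal L}_\beta(x^k,\lambda^k)-{\mathcal L}_\beta(x^k,\lambda^{k-1})\right],
\]
the second bracket is easy: since ${\mathcal L}_\beta(x,\cdot)$ is affine in $\lambda$ with gradient $-(\sum_i A_ix_i-b)$, it equals $-\langle \lambda^k-\lambda^{k-1},\sum_i A_ix_i^k-b\rangle$, and by \eqref{aux.32} we have $\sum_i A_ix_i^k-b = -\Delta\lambda^k/(\theta\beta)$, so this bracket is exactly $\frac{1}{\theta\beta}\|\Delta\lambda^k\|^2$.

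For the first bracket, the idea is to move from $x^{k-1}$ to $x^k$ one block at a time and use Lemma~\ref{LastauxLem} to convert each "partially updated" difference into the corresponding "fully old context" difference plus cross terms. Concretely, I would telescope
\[
{\mathcal L}_\beta(x^k,\lambda^{k-1})-{\mathcal L}_\beta(x^{k-1},\lambda^{k-1}) = \sum_{i=1}^p \left[{\mathcal L}_\beta(x_{<i}^k,x_i^k,x_{>i}^{k-1},\lambda^{k-1})-{\mathcal L}_\beta(x_{<i}^k,x_i^{k-1},x_{>i}^{k-1},\lambda^{k-1})\right],
\]
and apply Lemma~\ref{LastauxLem} (with $y^0 = x^{k-1}$, $y = x^k$, identifying the role of the $i$-th summand) to rewrite the $i$-th term as
\[
{\mathcal L}_\beta(x_{<i}^{k-1},x_i^k,x_{>i}^{k-1},\lambda^{k-1})-{\mathcal L}_\beta(x_{<i}^{k-1},x_i^{k-1},x_{>i}^{k-1},\lambda^{k-1}) + \beta\sum_{j<i}\langle A_i\Delta x_i^k, A_j\Delta x_j^k\rangle,
\]
so that summing over $i$ produces exactly the cross term $\sum_{1\le j<i\le p}\beta\langle A_i\Delta x_i^k,A_j\Delta x_j^k\rangle$ plus $\sum_{i=1}^p\left[{\mathcal L}_\beta(x_{<i}^{k-1},x_i^k,x_{>i}^{k-1},\lambda^{k-1})-{\mathcal L}_\beta(x_{<i}^{k-1},x_i^{k-1},x_{>i}^{k-1},\lambda^{k-1})\right]$. (For $i=1$ no rewriting is needed, since that term is already in "old context" form.)

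It remains to bound each term ${\mathcal L}_\beta(x_{<i}^{k-1},x_i^k,x_{>i}^{k-1},\lambda^{k-1})-{\mathcal L}_\beta(x_{<i}^{k-1},x_i^{k-1},x_{>i}^{k-1},\lambda^{k-1})$ by $-\frac{m_i}{2}\|\Delta x_i^k\|^2$. This is where the subproblem optimality and strong convexity come in: $x_i^k$ minimizes $x_i\mapsto {\mathcal L}_\beta(x_{<i}^{k-1},x_i,x_{>i}^{k-1},\lambda^{k-1}) + (dw_i^k)_{x_i^{k-1}}(x_i)$, so comparing the objective value at $x_i^k$ with that at $x_i^{k-1}$ and using $(dw_i^k)_{x_i^{k-1}}(x_i^{k-1})=0$ together with $(dw_i^k)_{x_i^{k-1}}(x_i^k)\ge \frac{m_i}{2}\|\Delta x_i^k\|^2$ (from \eqref{strongly}, since $w_i^k\in\mathcal{D}_{Z_i}(m_i,M_i)$) gives the bound. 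I expect this last step to be the main obstacle to state cleanly, since for $i=p$ the function $f_p$ is smooth but the argument is the same — one just needs the optimality inequality ${\mathcal L}_\beta(\ldots,x_i^k,\ldots) + (dw_i^k)_{x_i^{k-1}}(x_i^k) \le {\mathcal L}_\beta(\ldots,x_i^{k-1},\ldots) + 0$ — so in fact no special treatment of block $p$ is needed here. Collecting the three contributions (cross terms, $-\sum_i\frac{m_i}{2}\|\Delta x_i^k\|^2$, and $\frac{1}{\theta\beta}\|\Delta\lambda^k\|^2$) yields the claimed inequality.
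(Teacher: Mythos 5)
Your proposal is correct and follows essentially the same route as the paper's proof: the same split into a primal part (telescoped block by block, converted via Lemma~\ref{LastauxLem} into ``old-context'' differences plus cross terms, each bounded by $-\tfrac{m_i}{2}\|\Delta x_i^k\|^2$ using the subproblem optimality and \eqref{strongly}) and a dual part equal to $\tfrac{1}{\theta\beta}\|\Delta\lambda^k\|^2$. The only difference is the order in which the three ingredients are assembled, and your remark that block $p$ needs no special treatment is accurate.
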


\begin{proof}
First note that  \eqref{sub:s} together with the fact that $w_i^k\in {\mathcal{D}_{Z_i}(m_i,M_i)}$ imply that

\begin{equation*}\label{eq1:Lem_descLag}
{\mathcal L}_\beta(x_{<i}^{k-1},x_i^{k},x_{>i}^{k-1},\lambda^{k-1})- {\mathcal L}_\beta(x^{k-1},\lambda^{k-1})\leq -m_i\|\Delta x_i^k\|^2/2, \qquad i=1,\ldots,p.
\end{equation*}
\\
Hence, using Lemma~\ref{LastauxLem} with  $y^0=x^{k-1}$, $y=x^k$ and $\lambda=\lambda^{k-1}$, we see that
\begin{align*}
 {\mathcal L}_\beta (x_{< i}^k,x_i^k,x_{>i}^{k-1},\lambda^{k-1}) &-{\mathcal L}_\beta (x^{k}_{<i},x_i^{k-1},x_{> i}^{k-1},\lambda^{k-1}) \\
&=
{\mathcal L}_\beta (x_{< i}^{k-1},x_i^k,x_{>i}^{k-1},\lambda^{k-1})-{\mathcal L}_\beta (x^{k-1},\lambda^{k-1})
+ \beta \sum_{j=1}^{i-1} \langle A_i\Delta x^k_i,A_j\Delta x^k_j\rangle \\
&\le  - \frac{m_i}{2}\|\Delta x_i^k\|^2+\beta \sum_{j=1}^{i-1} \langle A_i\Delta x^k_i,A_j\Delta x^k_j\rangle.
\end{align*}
Hence
\begin{align}\label{eqauxmainlemma}
 {\mathcal L}_\beta(x^{k},\lambda^{k-1})-{\mathcal L}_\beta(x^{k-1},\lambda^{k-1})&=\sum_{i=1}^p\left[{\mathcal L}_\beta (x_{< i}^k,x_i^k,x_{>i}^{k-1},\lambda^{k-1}) -{\mathcal L}_\beta (x^{k}_{<i},x_i^{k-1},x_{> i}^{k-1},\lambda^{k-1})\right]\nonumber \\
 &\le  - \sum_{i=1}^p \frac{m_i}{2}\|\Delta x_i^k\|^2+\beta \sum_{i=2}^p\sum_{j=1}^{i-1} \langle A_i\Delta x^k_i,A_j\Delta x^k_j\rangle.
\end{align}
On the other hand, due to $\Delta \lambda^k =\lambda^k-\lambda^{k-1}$ and  \eqref{eq:lambda}, we have
$$
{\mathcal L}_\beta(x^{k},\lambda^k)-{\mathcal L}_\beta(x^k,\lambda^{k-1})=- \Inner{ \lambda^k-\lambda^{k-1}}{\sum_{i=1}^pA_ix_i^k-b}= \frac{1}{\beta\theta}\|\Delta \lambda^k\|^2.
$$
To conclude the proof, just add the last relation and  \eqref{eqauxmainlemma}.
\end{proof}

Lemma~\ref{lem:auxdecL} is essential to show that a certain sequence $\{\hat {\mathcal L}_k\}$ associated to $\{{\mathcal L}_\beta (x^k,\lambda^k)\}$ is monotonically decreasing.
This sequence  is defined as

\begin{align}
\hat {\mathcal L}_k := {\mathcal L}_\beta (x^k,\lambda^k) +\eta_k \qquad \forall k\geq 0 \label{def:Lhat},
\end{align}
where 
\begin{align}
\eta_0&:= \frac{m_p}{4M_p^2}\|A_p^*\lambda^0 -\nabla f_p(x_p^0)\|^2\label{def:eta_00}\\
\eta_k &:=\sum_{i=1}^{p}\frac{m_i}{4}\|\Delta x_i^k\|^2+\frac{c_1}{2}\|A_p^*\Delta\lambda^k\|^2\qquad \forall k\geq 1, \label{def:eta}
 \end{align}

\begin{equation}\label{def:c1} 
 c_1:=\frac{2|\theta-1|}{\beta\theta(1-|\theta-1|)\sigma^+_B}\geq0.
\end{equation}

Before establishing the monotonicity property of the  sequence \{$\hat {\mathcal L}_k\}$, we first  present an upper bound on $\hat {\mathcal L}_k - \hat {\mathcal L}_{k-1}$ in terms of some quantities
related to $\Delta x_1^k,\ldots,\Delta x_p^k$, and  $\Delta \lambda^k$.

\begin{lemma} For any $k\geq 1$, there holds
\begin{equation}\label{des:Ldec3}
\hat {\mathcal L}_k - \hat {\mathcal L}_{k-1} \leq \sum_{i=1}^{p-1}\left(\frac{(p-2+\alpha)\beta\|A_i\|^2}{2}-\frac{m_i}{4}\right)\|\Delta x_i^k\|^2+\Theta^k_\lambda +  \Theta_{p}^k
\end{equation}
where 
\begin{equation}\label{def:thetalambda}
\Theta^k_\lambda := 
\frac1{\beta\theta}\|\Delta \lambda^k\|^2 + \frac{c_1}2 \left(  \|A_p^*\Delta\lambda^k\|^2 - \|A_p^*\Delta\lambda^{k-1}\|^2 \right) 
\end{equation}

\begin{equation}\label{def:thetaxp}
\Theta_{p}^k:=\left(\frac{(p-1)\beta\|A_p\|^2}{2\alpha}-\frac{m_p}{4}\right)\left(\|\Delta x_p^k\|^2+\|\Delta x_p^{k-1}\|^2\right)
\end{equation}
and $\Delta \lambda^0=0$, $\Delta x_p^0=R_p^0/M_p$ (see Lemma~\ref{pr:aux-new}).
\end{lemma}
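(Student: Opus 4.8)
The plan is to start from Lemma~\ref{lem:auxdecL}, which already gives a bound on ${\mathcal L}_\beta(x^k,\lambda^k) - {\mathcal L}_\beta(x^{k-1},\lambda^{k-1})$, and then add the correction terms $\eta_k - \eta_{k-1}$ coming from \eqref{def:Lhat}--\eqref{def:eta}. So the identity to exploit is
\[
\hat{\mathcal L}_k - \hat{\mathcal L}_{k-1} = \left[{\mathcal L}_\beta(x^k,\lambda^k) - {\mathcal L}_\beta(x^{k-1},\lambda^{k-1})\right] + (\eta_k - \eta_{k-1}).
\]
I would treat the cases $k\ge 2$ and $k=1$ together by using the conventions $\Delta\lambda^0 = 0$ and $\Delta x_p^0 = R_p^0/M_p$, which make the $k=1$ instance of \eqref{def:eta} formally agree with the $\eta_0$ defined in \eqref{def:eta_00} (since $w_p^k \in \mathcal{D}_{Z_p}(m_p,M_p)$ forces $\|\Delta w_p^k\| \le M_p\|\Delta x_p^k\|$, and with $R_p^0 = \Delta w_p^0$-type identification one gets $\|A_p^*\lambda^0 - \nabla f_p(x_p^0)\|^2 = \|R_p^0\|^2 = M_p^2\|\Delta x_p^0\|^2$, so $\eta_0 = \frac{m_p}{4}\|\Delta x_p^0\|^2$ plus the $c_1$-term which vanishes because $\Delta\lambda^0=0$). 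I should double-check this bookkeeping, since getting the $k=1$ base case to fit the general formula is the first place a constant can go wrong.

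Next, the main work is to bound the cross terms $\sum_{1\le j<i\le p}\beta\langle A_i\Delta x_i^k, A_j\Delta x_j^k\rangle$ coming from Lemma~\ref{lem:auxdecL}. The strategy is to split off all terms that involve the last block $\Delta x_p^k$ from the terms that only involve $\Delta x_i^k$ with $i,j \le p-1$. For the pure "$<p$" cross terms, I would apply Young's inequality $2\langle A_i\Delta x_i^k, A_j\Delta x_j^k\rangle \le \|A_i\Delta x_i^k\|^2 + \|A_j\Delta x_j^k\|^2 \le \|A_i\|^2\|\Delta x_i^k\|^2 + \|A_j\|^2\|\Delta x_j^k\|^2$; summing over the $\binom{p-1}{2}$ pairs, each index $i \le p-1$ appears in exactly $p-2$ pairs, which accounts for the $(p-2)$ in the coefficient $(p-2+\alpha)\beta\|A_i\|^2/2$ in \eqref{des:Ldec3}. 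For the $p$ terms, i.e. $\sum_{j=1}^{p-1}\beta\langle A_p\Delta x_p^k, A_j\Delta x_j^k\rangle$, I would use a weighted Young inequality with parameter $\alpha$: $2\langle A_p\Delta x_p^k, A_j\Delta x_j^k\rangle \le \alpha^{-1}\|A_p\Delta x_p^k\|^2 + \alpha\|A_j\Delta x_j^k\|^2$; summing over $j=1,\ldots,p-1$ gives the extra $\alpha\beta\|A_i\|^2/2$ contribution to the coefficient of $\|\Delta x_i^k\|^2$ and a term $\frac{(p-1)\beta\|A_p\|^2}{2\alpha}\|\Delta x_p^k\|^2$, which is part of $\Theta_p^k$ in \eqref{def:thetaxp}.

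Then I would assemble everything. The $-\sum_{i=1}^p \frac{m_i}{2}\|\Delta x_i^k\|^2$ from Lemma~\ref{lem:auxdecL} combines with $+\sum_{i=1}^p \frac{m_i}{4}\|\Delta x_i^k\|^2$ from $\eta_k$ to leave $-\sum_{i=1}^p \frac{m_i}{4}\|\Delta x_i^k\|^2$; subtracting $\eta_{k-1}$ contributes $-\sum_{i=1}^p \frac{m_i}{4}\|\Delta x_i^{k-1}\|^2$, but only the $i=p$ piece of this survives in the stated bound — the $i\le p-1$ pieces of $-\eta_{k-1}$ are simply dropped (they are $\le 0$), which is legitimate for an upper bound. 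Collecting the $\Delta x_p^k$ and $\Delta x_p^{k-1}$ terms together with their $-m_p/4$ coefficients produces exactly $\Theta_p^k$ in \eqref{def:thetaxp}. Finally the $\lambda$ pieces: $\frac{1}{\beta\theta}\|\Delta\lambda^k\|^2$ from Lemma~\ref{lem:auxdecL} together with the $\frac{c_1}{2}\|A_p^*\Delta\lambda^k\|^2$ from $\eta_k$ minus $\frac{c_1}{2}\|A_p^*\Delta\lambda^{k-1}\|^2$ from $\eta_{k-1}$ is precisely $\Theta_\lambda^k$ in \eqref{def:thetalambda}. Putting the $i\le p-1$ coefficients as $\frac{(p-2+\alpha)\beta\|A_i\|^2}{2} - \frac{m_i}{4}$ yields \eqref{des:Ldec3}.

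The routine-but-delicate part is the combinatorial counting in the Young-inequality step (making sure each index $i\le p-1$ is hit exactly $p-2$ times among the pure cross terms and exactly once in the mixed terms with $p$), plus the $k=1$ base-case identification $\Delta x_p^0 = R_p^0/M_p$ so that $\eta_0$ matches the $k=1$ form of $\eta_k$ up to the dropped-nonnegative-terms slack; I expect that base case to be the one spot most likely to hide a factor-of-constant subtlety, so I would verify it explicitly rather than wave at it.
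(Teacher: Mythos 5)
Your proposal is correct and follows essentially the same route as the paper: start from Lemma~\ref{lem:auxdecL}, add the telescoping $\eta_k-\eta_{k-1}$ terms, split the cross terms into pairs with both indices below $p$ (Young's inequality with $t=1$, each index hit $p-2$ times) and pairs involving the $p$-th block (weighted Young with parameter $\alpha$), and absorb the leftover pieces into $\Theta^k_\lambda$ and $\Theta_p^k$. Your explicit verification of the $k=1$ base case via $\eta_0=\tfrac{m_p}{4}\|\Delta x_p^0\|^2$ and your careful $-m_i/2+m_i/4=-m_i/4$ accounting are in fact slightly cleaner than the paper's own intermediate displays, which carry a harmless $m_i/2$ versus $m_i/4$ typo.
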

\begin{proof}
From Lemma~\ref{lem:auxdecL} and definitions of $\hat {\mathcal L}_k$ and $\Theta_\lambda^k$, we obtain 
\begin{align}
\hat {\mathcal L}_k - \hat {\mathcal L}_{k-1} & \le \sum_{1\le j<i< p}\beta\| A_i\Delta x_i^k\|\|A_j\Delta x_j^ k\| +\sum_{i=1}^{p-1}\beta\| A_i\Delta x_i^k\|\|A_p\Delta x_p^ k\| -\sum_{i=1}^{p-1}\frac{m_i}{2}\|\Delta x_i^k\|^2+ \Theta^k_\lambda\\
&- \frac{m_p}{4}\left(\|\Delta x_p^k\|^2 + \|\Delta x_p^{k-1}\|^2\right)\\
&\leq \sum_{1\le j<i< p}\left(\frac{\beta}{2}\| A_i\Delta x_i^k\|^2+\frac{\beta}{2}\|A_j \Delta x_j^k\|^2\right) +\sum_{i=1}^{p-1}\frac{\alpha\beta}{2}\|A_i\Delta x_i^k\|^2+\frac{(p-1)\beta}{2\alpha}\|A_p\Delta x_p^k\|^2 \\
&-\sum_{i=1}^{p-1}\frac{m_i}{2}\|\Delta x_i^k\|^2+ \Theta^k_\lambda-\frac{m_p}{4}\left(\|\Delta x_p^k\|^2 + \|\Delta x_p^{k-1}\|^2\right) \\
&\leq \sum_{i=1}^{p-1}\left(\frac{(p-2+\alpha)\beta\|A_i\|^2}{2}-\frac{m_i}{2}\right)\|\Delta x_i^k\|^2+\Theta^k_\lambda + \Theta_{p}^k
\end{align}
where  the inequalities are due  to Cauchy-Schwarz inequality and by using the relation $2s_1s_2\leq 
ts_1^2+(1/t)s_2^2$,  $s_1,s_2\in \R$ for $t=1$ and $t=\alpha$, respectively. 
 
\end{proof}

The next  result compares  $\Theta^k_\lambda$ with $\|u_k\|$, defined in \eqref{def:thetalambda} and  \eqref{def:uk}, respectively,  and provides an upper bound for both elements in terms of $(\Delta x_1^k,\ldots, \Delta x_p^k)$.

\begin{lemma}\label{lem:des_theta1uk}
Consider $\Theta^k_\lambda$ as in \eqref{def:thetalambda} and $u^k$ as in \eqref{def:uk}. Then, 
\begin{align}
\Theta^k_\lambda &\leq \frac{\gamma_\theta}{\beta\sigma_{A_p}^+} \|u_k\|^2\label{eq:theta1uk}\\ 
&\le \frac{\gamma_\theta(p+1)}{\beta\sigma_{A_p}^+} \left[\sum_{j=1}^{p-1}\beta^2\| A_p^*A_j\|^2(\|\Delta x_j^k\|+\|\Delta x_j^{k-1}\|)^2+\left(L_p^2+ M_p^2\right) \left( \|\Delta x_p^k\| +\|\Delta x_p^{k-1}\| \right)^2 \right].\nonumber
\end{align}
where $\gamma_\theta$ is as in  \eqref{def:gamma} and  $\Delta x_i^0=0, i=1,\ldots, p-1,$ and $\Delta x_p^0=R_p^0/M_p$ (see Lemma~\ref{pr:aux-new}).
\end{lemma}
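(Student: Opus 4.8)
The plan is to split the work into two parts: first bounding $\Theta^k_\lambda$ by a multiple of $\|u^k\|^2$ using the recursion \eqref{eq:B*lambda}, and then bounding $\|u^k\|^2$ in terms of the displacements $\Delta x_j^k$ and $\Delta x_j^{k-1}$.

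For the first inequality, I would start from \eqref{eq:B*lambda}, which reads $A_p^*\Delta\lambda^k=(1-\theta)A_p^*\Delta\lambda^{k-1}+\theta u^k$. Since $\theta\in(0,2)$ we have $|\theta-1|<1$, so this is a contraction-type recursion. Taking norms and using the triangle inequality gives $\|A_p^*\Delta\lambda^k\|\le |\theta-1|\,\|A_p^*\Delta\lambda^{k-1}\| + \theta\|u^k\|$. The term $\frac1{\beta\theta}\|\Delta\lambda^k\|^2$ in $\Theta^k_\lambda$ must be handled via Lemma~\ref{le:147}: since $\Delta\lambda^k\in\mathrm{Im}(A_p^*A_p)+\mathrm{Im}(A_1),\dots$ — more precisely, from \eqref{aux.32} and {\bf(A1)} one checks $\Delta\lambda^k$ lies in $\rg A_p^*$ (the image of $A_p^*$), actually one should verify $\Delta\lambda^k = \cP_{A_p^*}(\Delta\lambda^k)$ — so $\|\Delta\lambda^k\|\le \frac1{\sqrt{\sigma^+_{A_p}}}\|A_p\Delta\lambda^k\|$, equivalently $\|\Delta\lambda^k\|^2 \le \frac1{\sigma^+_{A_p}}\|A_p^*\Delta\lambda^k\|^2$ after reconciling which of $A_pA_p^*$, $A_p^*A_p$ is meant. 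Then I would substitute the norm bound on $\|A_p^*\Delta\lambda^k\|$ into $\Theta^k_\lambda = \frac1{\beta\theta}\|\Delta\lambda^k\|^2 + \frac{c_1}{2}(\|A_p^*\Delta\lambda^k\|^2 - \|A_p^*\Delta\lambda^{k-1}\|^2)$, expand $(|\theta-1|a + \theta b)^2 \le$ (using $2ab \le t a^2 + t^{-1}b^2$ with a well-chosen $t$, say $t = (1-|\theta-1|)/|\theta-1|$ so the cross term telescopes against the negative $\|A_p^*\Delta\lambda^{k-1}\|^2$ term), and collect terms. The constant $c_1 = \frac{2|\theta-1|}{\beta\theta(1-|\theta-1|)\sigma^+_{A_p}}$ is evidently rigged precisely so that the $\|A_p^*\Delta\lambda^{k-1}\|^2$ contributions cancel and what remains is $\frac{\gamma_\theta}{\beta\sigma^+_{A_p}}\|u^k\|^2$ with $\gamma_\theta = \theta/(1-|\theta-1|)^2$. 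This bookkeeping — choosing $t$ and verifying the cancellation — is the one genuinely delicate computation and is the step I expect to be the main obstacle; it is essentially the same algebraic trick used in \cite{RMJ2017a}.

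For the second inequality, I would expand $u^k = \Delta f_p^k + \Delta R_p^k$ using \eqref{def:uk}, \eqref{def:R1R2}, and \eqref{eq:Delta-def}. We have $\Delta f_p^k = \nabla f_p(x_p^k) - \nabla f_p(x_p^{k-1})$ and $\Delta R_p^k = R_p^k - R_p^{k-1} = -\sum_{j\ne p}\beta A_p^*A_j(\Delta x_j^k - \Delta x_j^{k-1}) + (\Delta w_p^k - \Delta w_p^{k-1})$. So $u^k$ is a sum of $p+1$ terms: the $f_p$ term, the $(p-1)$ terms indexed by $j=1,\dots,p-1$, and the $w_p$ term. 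Applying $\|\sum_{l=1}^{p+1} a_l\|^2 \le (p+1)\sum_{l=1}^{p+1}\|a_l\|^2$ and then bounding each piece: $\|\Delta f_p^k\|\le L_p\|\Delta x_p^k\|\le L_p(\|\Delta x_p^k\| + \|\Delta x_p^{k-1}\|)$ by {\bf(A2)}; $\|\Delta w_p^k - \Delta w_p^{k-1}\| \le \|\Delta w_p^k\| + \|\Delta w_p^{k-1}\| \le M_p(\|\Delta x_p^k\| + \|\Delta x_p^{k-1}\|)$ using \eqref{a1} and \eqref{grad-d}; and $\|\beta A_p^*A_j(\Delta x_j^k - \Delta x_j^{k-1})\| \le \beta\|A_p^*A_j\|(\|\Delta x_j^k\| + \|\Delta x_j^{k-1}\|)$. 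Combining the $f_p$ and $w_p$ pieces into a single $(L_p^2 + M_p^2)(\|\Delta x_p^k\| + \|\Delta x_p^{k-1}\|)^2$ term (after another application of $(a+b)^2\le 2a^2+2b^2$ absorbed into the count, or by grouping the two terms as one block before applying the $(p+1)$-term inequality — I would group them so there are exactly $p+1$ blocks as the statement suggests). Multiplying through by $\frac{\gamma_\theta(p+1)}{\beta\sigma^+_{A_p}}$ and recalling $\Delta x_j^0 = 0$ for $j<p$, $\Delta x_p^0 = R_p^0/M_p$ (so $\|\Delta w_p^1 - \Delta w_p^0\|$ with the convention $\nabla w_p^0(x_p^0)-\nabla w_p^0(x_p^{-1})$ replaced appropriately matches $\|R_p^1 - R_p^0\|$, consistent with how $\Delta x_p^0$ was defined) yields the claimed bound. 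This part is routine once the first part is done; the only care needed is matching the $k=1$ base case against the conventions set in Lemma~\ref{pr:aux-new}.
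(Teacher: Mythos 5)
Your proposal follows essentially the same route as the paper's proof: project $\Delta\lambda^k$ via Lemma~\ref{le:147}, substitute the recursion \eqref{eq:B*lambda}, apply $\|s_1+s_2\|^2\le(1+t)\|s_1\|^2+(1+1/t)\|s_2\|^2$ with the same choice $t=(1-|\theta-1|)/|\theta-1|$ so that $c_1$ exactly cancels the $\|A_p^*\Delta\lambda^{k-1}\|^2$ terms, and then bound $\|u^k\|^2$ by splitting it into the same $p+1$ blocks and invoking $(\sum_{i=1}^{l}s_i)^2\le l\sum_{i=1}^{l}s_i^2$. The only point to tighten is the projection step: by {\bf(A1)} one has $\Delta\lambda^k=-\beta\theta(\sum_iA_ix_i^k-b)\in{\rm Im}(A_p)$ (not ${\rm Im}(A_p^*)$), so Lemma~\ref{le:147} applied with $S=A_p^*$ gives $\|\Delta\lambda^k\|=\|\cP_{A_p}(\Delta\lambda^k)\|\le\|A_p^*\Delta\lambda^k\|/\sqrt{\sigma^+_{A_p}}$, which is the inequality you end up using anyway.
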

\begin{proof}
The proof of this lemma is given in Appendix~\ref{proofLemmathetauk}.
\end{proof}

The next proposition shows, in particular, that  the sequence $\{\hat {\mathcal L}_k\}$ is  decreasing and  bounded below.

\begin{proposition}\label{prop:decL} Let $\Delta x_i^0=0, i=1,\ldots, p-1,$ and $\Delta x_p^0=R_p^0/M_p$.
Then, the following statements hold:
\begin{itemize}
\item[(a)] for every $k \ge 1$,
\[
\hat {\mathcal L}_k - \hat {\mathcal L}_{k-1} \le  -\sum_{i=1}^{p}\delta_i(\|\Delta x_i^k\|^2+\|\Delta x_i^{k-1}\|^2);
\]
\item[(b)]
the sequence $\{\hat {\mathcal L}_k\}$ given in \eqref{def:Lhat} satisfies $\hat {\mathcal L}_k \geq v(\beta)$ for every $k\geq 0$; 
\item[(c)] for every $k \ge 1$,
\[
\sum_{j=1}^k\sum_{i=1}^p \delta_i(\|\Delta x_i^j\|^2+\|\Delta x_i^{j-1}\|^2)
\leq \hat{\mathcal L}_0-v(\beta)
\]
\end{itemize}
where  $v(\beta)$ and $\delta_i$  are as in ${\bf (A3)}$ and \eqref{assump:delta1}, respectively.
\end{proposition}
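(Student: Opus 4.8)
The plan is to combine the two estimates already available — inequality \eqref{des:Ldec3} bounding $\hat{\mathcal L}_k-\hat{\mathcal L}_{k-1}$ and inequality \eqref{eq:theta1uk} bounding $\Theta^k_\lambda$ by a multiple of $\|u^k\|^2$ and then by the displacements — and then telescope.

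For part (a), I would start from \eqref{des:Ldec3}, substitute the second bound from Lemma \ref{lem:des_theta1uk} for $\Theta^k_\lambda$, and collect all terms involving $\|\Delta x_i^k\|^2$ and $\|\Delta x_i^{k-1}\|^2$ for each $i$. For the blocks $i=1,\ldots,p-1$, the coefficient coming from \eqref{des:Ldec3} is $\bigl(\tfrac{(p-2+\alpha)\beta\|A_i\|^2}{2}-\tfrac{m_i}{4}\bigr)$, and from the $\Theta^k_\lambda$ bound one gets (after expanding $(\|\Delta x_j^k\|+\|\Delta x_j^{k-1}\|)^2\le 2\|\Delta x_j^k\|^2+2\|\Delta x_j^{k-1}\|^2$ and using $\|A_p^*A_j\|\le\|A_p^*\|\max_l\|A_l\|$) a contribution of the form $\tfrac{2\gamma_\theta(p+1)}{\sigma_{A_p}^+}\|A_p^*\|^2\beta\max_l\|A_l\|^2$ attached symmetrically to $\|\Delta x_i^k\|^2$ and $\|\Delta x_i^{k-1}\|^2$. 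Matching these against the definition of $\delta_i$ in \eqref{assump:delta1} gives the coefficient $-\delta_i$ on both $\|\Delta x_i^k\|^2$ and $\|\Delta x_i^{k-1}\|^2$. For $i=p$, one combines the $\Theta_p^k$ term from \eqref{def:thetaxp}, namely $\bigl(\tfrac{(p-1)\beta\|A_p\|^2}{2\alpha}-\tfrac{m_p}{4}\bigr)(\|\Delta x_p^k\|^2+\|\Delta x_p^{k-1}\|^2)$, with the last term of the Lemma \ref{lem:des_theta1uk} bound, $\tfrac{\gamma_\theta(p+1)(L_p^2+M_p^2)}{\beta\sigma_{A_p}^+}\cdot 2(\|\Delta x_p^k\|^2+\|\Delta x_p^{k-1}\|^2)$; matching against $\delta_p$ in \eqref{assump:delta1} (note the $L_p^2+2M_p^2$ there versus $L_p^2+M_p^2$ here, which leaves room) again yields coefficient $-\delta_p$. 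One must be slightly careful that \eqref{des:Ldec3} as displayed has $m_i/4$ in the $\Theta^k_\lambda$-free line but $m_i/2$ in the final displayed line of its proof; I would use the sharper $m_i/4$ form, which is what is actually needed. Carrying this bookkeeping through gives statement (a).

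For part (b), I would unfold the definition $\hat{\mathcal L}_k={\mathcal L}_\beta(x^k,\lambda^k)+\eta_k$ and use the constraint identity \eqref{aux.32}, i.e. $\sum_i A_ix_i^k-b=-\Delta\lambda^k/(\theta\beta)$, to rewrite
${\mathcal L}_\beta(x^k,\lambda^k)=\sum_i f_i(x_i^k)+\tfrac1{\theta\beta}\langle\lambda^k,\Delta\lambda^k\rangle+\tfrac{\beta}{2\theta^2\beta^2}\|\Delta\lambda^k\|^2$. Writing $\langle\lambda^k,\Delta\lambda^k\rangle=\tfrac12\|\lambda^k\|^2-\tfrac12\|\lambda^{k-1}\|^2+\tfrac12\|\Delta\lambda^k\|^2$ shows the cross term is harmless; more simply, since $\eta_k\ge0$ and adding $\tfrac\beta2\|\sum_iA_ix_i^k-b\|^2$ only helps, one sees $\hat{\mathcal L}_k\ge {\mathcal L}_\beta(x^k,\lambda^k)\ge \sum_i f_i(x_i^k)-\langle\lambda^k,\sum_iA_ix_i^k-b\rangle+\tfrac\beta2\|\cdots\|^2$. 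The obstruction is the linear multiplier term; the standard fix, as in \cite{RMJ2017a,MBProxNonconvRenJeff}, is to absorb it by completing the square against the penalty, giving $\hat{\mathcal L}_k\ge\inf_x\{\sum_i f_i(x_i)+\tfrac\beta2\|\sum_iA_ix_i-b\|^2\}=v(\beta)$ once one checks the residual multiplier contribution is nonnegative — here is precisely where one uses that $\eta_k$ contains the $\tfrac{c_1}2\|A_p^*\Delta\lambda^k\|^2$ term and Lemma \ref{le:147} together with assumption {\bf(A1)} (so that $\Delta\lambda^k$ is controlled by $A_p^*\Delta\lambda^k$) to dominate any leftover. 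For $k=0$ the term $\eta_0=\tfrac{m_p}{4M_p^2}\|A_p^*\lambda^0-\nabla f_p(x_p^0)\|^2$ plays the analogous role. This is the step I expect to be the main obstacle: getting the lower bound $v(\beta)$ cleanly requires carefully tracking how $\lambda^k$ (not just $\Delta\lambda^k$) enters, and using {\bf(A1)}/Lemma \ref{le:147} to express everything in terms of quantities that the $\eta_k$ reserve and the penalty term can absorb.

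Part (c) is then immediate: sum the inequality in (a) over $j=1,\ldots,k$ to telescope the left-hand side into $\hat{\mathcal L}_k-\hat{\mathcal L}_0$, so that
$\sum_{j=1}^k\sum_{i=1}^p\delta_i(\|\Delta x_i^j\|^2+\|\Delta x_i^{j-1}\|^2)\le \hat{\mathcal L}_0-\hat{\mathcal L}_k\le\hat{\mathcal L}_0-v(\beta)$,
the last inequality being exactly part (b). Since each $\delta_i>0$ by the choice in step (0) of the NEPJ-ADMM, the right-hand side is a finite constant, which is what is asserted.
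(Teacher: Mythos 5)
Your treatment of parts (a) and (c) follows the paper's route: for (a) you combine \eqref{des:Ldec3} with the second bound of Lemma~\ref{lem:des_theta1uk}, expand the squared sums, and match coefficients against \eqref{assump:delta1}; for (c) you telescope (a) and invoke (b). That bookkeeping is essentially the paper's own proof of those two items and is fine.

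The genuine gap is in part (b). After using \eqref{eq:lambda} to rewrite $-\langle\lambda^k,\sum_i A_ix_i^k-b\rangle=\frac{1}{\beta\theta}\langle\lambda^k,\Delta\lambda^k\rangle=\frac{1}{2\beta\theta}\bigl(\|\lambda^k\|^2-\|\lambda^{k-1}\|^2+\|\Delta\lambda^k\|^2\bigr)$, the leftover term is $\frac{1}{2\beta\theta}\bigl(\|\lambda^k\|^2-\|\lambda^{k-1}\|^2\bigr)$, which is \emph{not} harmless for a pointwise bound: it involves the full multiplier iterates rather than an increment, and for a fixed $k$ it can be as negative as $-\frac{1}{2\beta\theta}\|\lambda^{k-1}\|^2$. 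No completion of the square against the penalty, and no reserve of increment size such as $\frac{c_1}{2}\|A_p^*\Delta\lambda^k\|^2$ inside $\eta_k$ (even with Lemma~\ref{le:147} and {\bf(A1)}), can dominate a quantity of order $\|\lambda^{k-1}\|^2$, so the absorption you sketch cannot be carried out; indeed the pointwise inequality ${\mathcal L}_\beta(x^k,\lambda^k)\ge v(\beta)$ is not what the paper establishes directly. The actual argument is global and by contradiction: the identity above telescopes to give $\sum_{k=1}^j(\hat{\mathcal L}_k-v(\beta))\ge \frac{1}{2\beta\theta}\bigl(\|\lambda^j\|^2-\|\lambda^0\|^2\bigr)\ge-\frac{1}{2\beta\theta}\|\lambda^0\|^2$ for all $j$, whereas if some $\hat{\mathcal L}_{k_0+1}<v(\beta)$ then the monotonicity from part (a) would drive these partial sums to $-\infty$. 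That interplay between the decrease in (a) and the telescoped lower bound on the partial sums is the missing idea; without it your proof of (b) --- and hence of (c), which depends on it --- does not close.
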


\proof 
(a)  It follows from  \eqref{des:Ldec3}, Lemma~\ref{lem:des_theta1uk},  \eqref{assump:delta1}  and \eqref{def:thetaxp} that
\begin{align*}
\hat {\mathcal L}_k - \hat {\mathcal L}_{k-1} &\leq -\sum_{i=1}^{p-1}\left[\frac{m_i}{4}-\left(\frac{p-2+\alpha}{2}+\frac{2\gamma_\theta(p+1)}{\sigma_{A_p}^+} \| A_p^*\|^2\right) \beta\max_{1\leq j\leq p-1}\|A_i\|^2\right]\left(\|\Delta x_i^k\|^2+\|\Delta x_i^{k-1}\|^2\right)\\
 &-\left[\frac{m_p}{4}-\left(\frac{\beta(p-1)\|A_p\|^2}{2\alpha}+\frac{ \gamma_\theta(p+1) (L_p^2 + 2M_p^2 ) }{\beta\sigma_{A_p}^+}\right)\right]\left(\|\Delta x_p^k\|^2+\|\Delta x_p^{k-1}\|^2\right)  \\[2mm]
                                      &=-\sum_{i=1}^p\delta_i\left(\|\Delta x_i^k\|^2+\|\Delta x_i^{k-1}\|^2\right),
\end{align*}
proving (a). The proof of (b) is given Appendix~\ref{proof_item(b)}.
 The proof of (c) follows immediately from (a) and (b).
\endproof
Next proposition presents some convergence rate bounds for the displacements $\Delta x_i^k$, $i=1,\ldots,p$, and $\Delta \lambda^k$ in terms of some initial parameters.  Our main result will follow easily from this proposition, due to the fact that the residual generated by $(x^k,\hat \lambda^k)$ in order to satisfy the Lagrangian system \eqref{def:criticalpoint}  (see Lemma~\ref{pr:aux}) can be controlled by these displacements.
\begin{proposition}\label{lem:sumDeltaxylambda} 
Let $\delta_i$, i=1,\ldots,p, be as in   \eqref{assump:delta1} and  define
 \begin{equation}\label{def:deltalambda}
\delta_\lambda:= \left[\frac{\theta\gamma_\theta (p+1)}{\sigma_{A_p}^+\displaystyle\min_{1\leq l \leq p} \delta_l}\left(2\beta^2\| A_p^*\|^2\max_{1\leq l\leq p-1}\|A_l\|^2+ L_p^2+2M_p^2\right)\right]^{-1}
\end{equation}  
where $\Delta {\mathcal L}_0:=\hat{\mathcal L}_0-v(\beta)$ {\rm (see \eqref{def:Lhat} and ({\bf A3}))}.
Then, for every $k \ge 1$, we have
\begin{equation}
\sum_{j=1}^k\left\{\left[ \sum_{i=1}^p \delta_i\left(\|\Delta x_i^j\|^2+\|\Delta x_i^{j-1}\|^2\right)\right] +\delta_\lambda\|\Delta \lambda^j\|^2 \right\}\leq 2\Delta {\mathcal L}_0\label{des:deltay&x}
\end{equation}
and there exists $j\leq k$ such that
\begin{equation}
\|\Delta x_i^j\|\leq \sqrt{\frac{2\Delta {\mathcal L}_0}{k\delta_i}}, \;i=1,\ldots,p, \quad
\|\Delta \lambda^j\|\leq \sqrt{\frac{2\Delta {\mathcal L}_0}{k\delta_\lambda}}.\label{lem:bound_deltaxylambda}
\end{equation}
\end{proposition}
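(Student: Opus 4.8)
The plan is to build the proof in two stages. First, I would establish the summable bound \eqref{des:deltay&x}, and then extract the per-iteration rate \eqref{lem:bound_deltaxylambda} by a pigeonhole argument. For the first stage, the starting point is Proposition~\ref{prop:decL}(c), which already gives
\[
\sum_{j=1}^k\sum_{i=1}^p \delta_i\left(\|\Delta x_i^j\|^2+\|\Delta x_i^{j-1}\|^2\right)\leq \Delta {\mathcal L}_0.
\]
So the work is to control the extra term $\delta_\lambda\sum_{j=1}^k\|\Delta \lambda^j\|^2$ by the same right-hand side (up to the factor $2$). The natural route is to bound $\|\Delta\lambda^j\|$ in terms of the $x$-displacements. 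By Lemma~\ref{le:147} applied with $S=A_p$ (using {\bf (A1)}, which guarantees $\Delta\lambda^j\in\mathrm{Im}(A_p)$ — this needs a short justification from \eqref{aux.32} and the inclusion of the ranges, together with $\Delta\lambda^0=0$ and an induction via \eqref{eq:B*lambda}), we get
\[
\|\Delta \lambda^j\| = \|\cP_{A_p^*}^{\perp}\cdots\| \le \frac{1}{\sqrt{\sigma_{A_p}^+}}\,\|A_p^*\Delta\lambda^j\| \quad\text{(after the appropriate projection identity)},
\]
and then \eqref{eq:B*lambda} together with Lemma~\ref{lem:des_theta1uk} (more precisely the chain that bounds $\|u^j\|^2$, hence $\|A_p^*\Delta\lambda^j\|$ via \eqref{eq:B*lambda} and the definition of $\gamma_\theta$) expresses $\|A_p^*\Delta\lambda^j\|^2$ through $(\|\Delta x_i^j\|+\|\Delta x_i^{j-1}\|)^2 \le 2(\|\Delta x_i^j\|^2+\|\Delta x_i^{j-1}\|^2)$. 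Collecting constants and matching them against the definition \eqref{def:deltalambda} of $\delta_\lambda$ should yield
\[
\delta_\lambda\sum_{j=1}^k\|\Delta\lambda^j\|^2 \le \sum_{j=1}^k\sum_{i=1}^p\delta_i\left(\|\Delta x_i^j\|^2+\|\Delta x_i^{j-1}\|^2\right)\le \Delta{\mathcal L}_0,
\]
and adding this to Proposition~\ref{prop:decL}(c) gives \eqref{des:deltay&x}.

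For the second stage, \eqref{lem:bound_deltaxylambda} is a routine averaging argument: since the sum over $j=1,\ldots,k$ of the nonnegative quantity
\[
\Phi_j := \Big[\sum_{i=1}^p \delta_i\big(\|\Delta x_i^j\|^2+\|\Delta x_i^{j-1}\|^2\big)\Big] + \delta_\lambda\|\Delta\lambda^j\|^2
\]
is at most $2\Delta{\mathcal L}_0$, there must exist an index $j\le k$ with $\Phi_j \le 2\Delta{\mathcal L}_0/k$. For that $j$, dropping all but one summand in $\Phi_j$ gives $\delta_i\|\Delta x_i^j\|^2 \le 2\Delta{\mathcal L}_0/k$ for each $i$ and $\delta_\lambda\|\Delta\lambda^j\|^2\le 2\Delta{\mathcal L}_0/k$, which rearranges to exactly \eqref{lem:bound_deltaxylambda}. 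Note the same $j$ works for all coordinates simultaneously, which is what the statement claims.

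The main obstacle I anticipate is the bookkeeping in the first stage: correctly tracking the constants so that the combination of Lemma~\ref{le:147} (the $1/\sqrt{\sigma_{A_p}^+}$ factor, which becomes $1/\sigma_{A_p}^+$ after squaring), the recursion \eqref{eq:B*lambda} (which contributes the $\gamma_\theta = \theta/(1-|\theta-1|)^2$ factor when one unrolls $A_p^*\Delta\lambda^j = (1-\theta)A_p^*\Delta\lambda^{j-1}+\theta u^j$ and estimates the resulting geometric-type sum), the factor $p+1$ from the crude Cauchy--Schwarz expansion of $\|u^j\|^2$ in Lemma~\ref{lem:des_theta1uk}, and the $L_p^2+2M_p^2$ and $\max_l\|A_l\|^2$ terms all land precisely in the reciprocal appearing in \eqref{def:deltalambda}. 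A secondary subtlety is the need to handle the telescoping/boundary terms — the appearance of $\|\Delta x_p^{j-1}\|$ with the convention $\Delta x_p^0 = R_p^0/M_p$ — so that the index shift does not lose a factor; this is why the bound in \eqref{des:deltay&x} carries $2\Delta{\mathcal L}_0$ rather than $\Delta{\mathcal L}_0$. These are all mechanical once the structure is set up, but they are the place where an error would most easily creep in.
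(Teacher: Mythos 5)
Your proposal is correct and follows essentially the same route as the paper: Proposition~\ref{prop:decL}(c) for the $x$-displacements, the range condition in {\bf (A1)} together with Lemma~\ref{le:147}, the recursion \eqref{eq:B*lambda} and the $\|u^k\|$ bound of Lemma~\ref{lem:des_theta1uk} to absorb $\sum_j\|\Delta\lambda^j\|^2$ into $\Delta\mathcal{L}_0/\delta_\lambda$, and a pigeonhole step (the paper packages the $\lambda$-part through the telescoping identity $\|\Delta\lambda^k\|^2=\beta\theta[\tfrac{c_1}{2}(\|A_p^*\Delta\lambda^{k-1}\|^2-\|A_p^*\Delta\lambda^k\|^2)+\Theta^k_\lambda]$ rather than unrolling the recursion, but this is the same accounting). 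One small misstatement in your closing remarks: the factor $2$ in $2\Delta\mathcal{L}_0$ comes simply from adding the two separate bounds by $\Delta\mathcal{L}_0$ (as your first stage correctly does), not from the boundary convention for $\Delta x_p^0$.
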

\begin{proof}
It follows from Proposition~\ref{prop:decL}(c) that
\begin{equation}\label{eq:auxdeltayBy}
 \quad \sum_{j=1}^k \sum_{i=1}^{p}\left(\|\Delta x_i^j\|^2+\|\Delta x_i^{j-1}\|^2\right)\leq  \frac{\Delta {\mathcal L}_0}{\displaystyle\min_{1\leq i \leq p} \delta_i}
\end{equation}
and that in order to prove  \eqref{des:deltay&x},  it suffices to show that  
\begin{equation}\label{des:deltaLambda}
\sum_{j=1}^k \|\Delta \lambda^j\|^2 \leq\frac{\Delta {\mathcal L}_0}{\delta_\lambda}.
\end{equation}
Then, in the remaining part of the proof we will show that  \eqref{des:deltaLambda} holds. By rewriting   \eqref{def:thetalambda}, we have
$$
\|\Delta \lambda^k\|^2= \beta\theta\left[ \frac{c_1}2 \left(  \|A_p^*\Delta\lambda^{k-1}\|^2 - \|A_p^*\Delta\lambda^k\|^2\right) + \Theta^k_\lambda \right]  \qquad \forall k\geq1.
$$
Hence, due to $\Delta \lambda^0=0$ and  Lemma~\ref{lem:des_theta1uk},  we obtain
\begin{align*}
\sum_{j=1}^k \|\Delta \lambda^j\|^2 
&\leq \beta\theta \sum_{j=1}^k\Theta^j_\lambda \leq \frac{\theta\gamma_\theta}{\sigma_{A_p}^+}\sum_{j=1}^k\|u^j\|^2\\
&\leq  \frac{\theta\gamma_\theta(p+1)}{\sigma_{A_p}^+} \left[2\beta^2\| A_p^*\|^2\max_{1\leq l\leq p-1}\|A_l\|^2\sum_{j=1}^k\sum_{i=1}^{p-1}\left(\|\Delta x_i^j\|^2+\|\Delta x_i^{j-1}\|^2\right)\right]\\
&+\frac{\theta\gamma_\theta(p+1)}{\sigma_{A_p}^+} (L_p^2+2M_p^2)\sum_{j=1}^k\left( \|\Delta x_p^j\|^2 +\|\Delta x_p^{j-1}\|^2 \right).\\
&\leq \frac{\theta\gamma_\theta (p+1)}{\sigma_{A_p}^+}\left(2\beta^2\| A_p^*\|^2\max_{1\leq l\leq p-1}\|A_l\|^2+ L_p^2+2M_p^2\right)\frac{\Delta {\mathcal L}_0}{\displaystyle\min_{1\leq i \leq p} \delta_i}\\
\end{align*}
where the fourth inequality is due to  \eqref{eq:auxdeltayBy}.
It is now to verify that  the previous estimate and  \eqref{def:deltalambda} imply \eqref{des:deltaLambda}, which in turn
implies \eqref{des:deltay&x}. 
\end{proof}

We now present  the main convergence rate result for the NEPJ-ADMM. Its main conclusion is that the NEPJ-ADMM  generates an element  $(\bar{x}_1,\ldots,\bar{x}_p,\bar{\lambda})$ which satisfies the optimality conditions of Definition~\ref{def:criticalpoint} within an error of $\mathcal{O}(1/\sqrt{k})$.

\begin{theorem}\label{maintheo} Let 
$\Delta {\mathcal L}_0  :={\mathcal L}_\beta(x^0,\lambda^0)- v (\beta) + \eta_0$
where $\eta_0$  and $v(\beta)$ are as in \eqref{def:eta_00} and {\bf (A3)}, respectively. Let  $\hat{\lambda}^k$ and $R_i^k$, $i=1,\ldots,p$, be as in   \eqref{def:lambdahat} and \eqref{def:R1R2}, respectively. Consider $\delta_i$, $i=1,\ldots,p$, as  in  \eqref{assump:delta1} and let
$\delta_{\lambda}$ be as in \eqref{def:deltalambda}.
Then, the following statements hold:
\begin{itemize}
\item[a)] $\Delta {\mathcal L}_0 \geq 0;$
\item[b)]  for every $k \ge 1$, 
\begin{align*}
 0&\in \partial f_i(x_i^k)-A_i^* \hat \lambda^k +R_{i}^k \qquad i=1,\ldots,p,
\end{align*}
and there exists $j\leq k$ such that
\begin{align*}
&\|R_i^j\|\leq \left[\sum_{l=1,l\neq i}^p\beta \|A_i^*A_l\|+ M_i\right]\sqrt{\frac{2\Delta {\mathcal L}_0}{k\displaystyle\min_{1\leq l \leq p} \delta_l}},\qquad i=1,\ldots,p,\\[2mm]
&\left\|\sum_{i=1}^pA_p x_i^j-b\right\|\leq \frac{1}{\beta\theta}\sqrt{\frac{2\Delta {\mathcal L}_0}{k\delta_\lambda }}.
\end{align*}
\end{itemize}
\end{theorem}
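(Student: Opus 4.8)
\textbf{Proof plan for Theorem~\ref{maintheo}.}

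The plan is to assemble the theorem directly from the technical lemmas already developed, since most of the heavy lifting has been done in Proposition~\ref{lem:sumDeltaxylambda}. For part (a), I would note that $\Delta{\mathcal L}_0 = \hat{\mathcal L}_0 - v(\beta)$ by the definitions in \eqref{def:Lhat} and \eqref{def:eta_00}, and then invoke Proposition~\ref{prop:decL}(b), which gives $\hat{\mathcal L}_k \ge v(\beta)$ for every $k \ge 0$, in particular $\hat{\mathcal L}_0 \ge v(\beta)$. This is immediate.

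For part (b), the inclusion $0 \in \partial f_i(x_i^k) - A_i^*\hat\lambda^k + R_i^k$ for $i=1,\ldots,p$ is exactly \eqref{aux.211} from Lemma~\ref{pr:aux}, so nothing new is needed there. The quantitative bounds are where the work lies. First I would apply Proposition~\ref{lem:sumDeltaxylambda}: inequality \eqref{lem:bound_deltaxylambda} guarantees the existence of a single index $j \le k$ such that simultaneously $\|\Delta x_i^j\| \le \sqrt{2\Delta{\mathcal L}_0/(k\delta_i)}$ for all $i=1,\ldots,p$ and $\|\Delta\lambda^j\| \le \sqrt{2\Delta{\mathcal L}_0/(k\delta_\lambda)}$. (One must be slightly careful that the same $j$ works for all the displacements at once; this follows because the proposition bounds the sum $\sum_j\{\sum_i\delta_i(\|\Delta x_i^j\|^2 + \|\Delta x_i^{j-1}\|^2) + \delta_\lambda\|\Delta\lambda^j\|^2\}$ by $2\Delta{\mathcal L}_0$, so averaging over $j=1,\ldots,k$ produces one index at which the whole bracketed quantity is at most $2\Delta{\mathcal L}_0/k$, and then each individual term is bounded.) Then, for the residual $R_i^j$, I would use its definition \eqref{def:R1R2}, namely $R_i^j = -\sum_{l=1,l\neq i}^p \beta A_i^*A_l \Delta x_l^j + \Delta w_i^j$, apply the triangle inequality, bound $\|A_i^*A_l\Delta x_l^j\| \le \|A_i^*A_l\|\,\|\Delta x_l^j\|$, bound $\|\Delta w_i^j\| \le M_i\|\Delta x_i^j\|$ using \eqref{a1} together with \eqref{eq:Delta-def}, and finally replace each $\|\Delta x_l^j\|$ by the uniform bound $\sqrt{2\Delta{\mathcal L}_0/(k\min_l \delta_l)}$. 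This yields the stated coefficient $\sum_{l\ne i}\beta\|A_i^*A_l\| + M_i$ times $\sqrt{2\Delta{\mathcal L}_0/(k\min_l\delta_l)}$. For the feasibility residual, \eqref{aux.32} gives $\sum_{i=1}^p A_i x_i^j - b = -\Delta\lambda^j/(\theta\beta)$, so its norm is $\|\Delta\lambda^j\|/(\theta\beta) \le (1/(\beta\theta))\sqrt{2\Delta{\mathcal L}_0/(k\delta_\lambda)}$.

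The only genuinely delicate point is the "common index $j$" issue just mentioned — ensuring that one and the same $j$ controls every displacement, which requires reading \eqref{des:deltay&x} as a bound on a sum of nonnegative terms and applying the pigeonhole/averaging argument to the composite quantity rather than to each displacement separately. Everything else is bookkeeping: substituting definitions, triangle inequalities, and the operator-norm estimates $\|\Delta w_i^j\| \le M_i\|\Delta x_i^j\|$ and $\|A_i^* A_l \Delta x_l^j\| \le \|A_i^* A_l\|\|\Delta x_l^j\|$. I do not anticipate any real obstacle; the main risk is simply keeping the constants consistent with the definitions of $\delta_i$ and $\delta_\lambda$ in \eqref{assump:delta1} and \eqref{def:deltalambda}.
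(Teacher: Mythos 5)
Your proposal is correct and follows essentially the same route as the paper: part (a) via Proposition~\ref{prop:decL} (the paper cites item (c), you cite item (b) — equivalent here since $\Delta{\mathcal L}_0=\hat{\mathcal L}_0-v(\beta)$), and part (b) by combining Lemma~\ref{pr:aux}, the triangle-inequality bound $\|R_i^j\|\le\sum_{l\ne i}\beta\|A_i^*A_l\|\|\Delta x_l^j\|+M_i\|\Delta x_i^j\|$, the identity $\|\sum_i A_ix_i^j-b\|=\|\Delta\lambda^j\|/(\beta\theta)$, and the common-index bound \eqref{lem:bound_deltaxylambda} from Proposition~\ref{lem:sumDeltaxylambda}. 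Your explicit attention to the single-index pigeonhole argument is a point the paper leaves implicit inside \eqref{lem:bound_deltaxylambda}, but it is the same argument.
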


\begin{proof}
(a) holds due to Proposition~\ref{prop:decL}(c).  Lemma~\ref{pr:aux} shows that the first statement of (b) holds.  Now, it follows from   \eqref{def:R1R2}, \eqref{aux.32} and  the fact that $w_i^k\in {\mathcal{D}_{Z_i}(m_i,M_i)}$, $i=1,\ldots,p$, that
\begin{align*}
&\|R_i^k\|\leq \sum_{l=1,l\neq i}^p\beta \|A_i^*A_l|\|\Delta x_l^k\|+ M_i\|\Delta x_i^k\|,\qquad i=1,\ldots,p, \\
 &\left\|\sum_{i=1}^pA_i x_i^k-b\right\|=\frac{1}{\beta\theta}\|\Delta\lambda^k\|.
\end{align*}
Hence, to end the proof, just combine the above relations with \eqref{lem:bound_deltaxylambda}.
\end{proof}
\appendix
\section{Proof of Lemma~\ref{lem:des_theta1uk}}\label{proofLemmathetauk}
Let us first prove the first inequality \eqref{eq:theta1uk}.
Assumption {\bf (A1)} clearly  implies that $$\Delta \lambda_k=-\beta\theta\left(\sum_{i=1}^pA_ix_i-b\right)\in  {\rm Im}(A_p).$$
Hence, it follows from  Lemma~\ref{le:147} that
$$
\|\Delta \lambda_k\|=\|\cP_{A_p}(\Delta \lambda_k)\|\leq \frac{1}{\sqrt{\sigma_{A_p}^+}}\|A_p^*\Delta \lambda_k\|.
$$
Thus, in view of \eqref{eq:B*lambda} and \eqref{def:thetalambda}, we have
\begin{align*}
\Theta^k_\lambda &\leq \frac{1}{\beta\theta\sigma_{A_p}^+}\|A_p^*\Delta\lambda_k\|^2+\frac{c_1}{2}\left(\|A_p^*\Delta\lambda_k\|^2-\|A_p^*\Delta\lambda_{k-1}\|^2\right)\\
&=\left(\frac{1}{\beta\theta\sigma_{A_p}^+}+\frac{c_1}{2}\right)\|(1-\theta)A_p^*\Delta\lambda_{k-1}+\theta u_k\|^2-\frac{c_1}{2}\|A_p^*\Delta\lambda_{k-1}\|^2.
\end{align*}
Note that if $\theta=1$, then \eqref{def:c1} implies that $c_1=0$ and the above inequality proves the first inequality of the lemma.
We will now prove the first inequality of the lemma for the case in which $\theta \ne 1$.
The previous inequality together with the relation $\|s_1+s_2\|^2\leq (1+t)\|s_1\|^2+(1+1/t)\|s_2\|^2$
which holds for every $s_1,s_2\in \R^m$ and $t>0$
yield
\small{
\begin{align*}
\Theta^k_\lambda&\leq\left(\frac{1}{\beta\theta\sigma_{A_p}^+}+\frac{c_1}{2}\right)\left[(1+t)(\theta-1)^2\|A_p^*\Delta\lambda_{k-1}\|^2+\left(1+\frac{1}{t}\right)\theta^2\|u_k\|^2\right]-\frac{c_1}{2}\|A_p^*\Delta\lambda_{k-1}\|^2\\
&=\left[\left(\frac{1}{\beta\theta\sigma_{A_p}^+}+\frac{c_1}{2}\right)(1+t)(\theta-1)^2-\frac{c_1}{2}\right]\|A_p^*\Delta\lambda_{k-1}\|^2+\left(\frac{1}{\beta\theta\sigma_{A_p}^+}+\frac{c_1}{2}\right)\left(1+\frac{1}{t}\right)\theta^2\|u_k\|^2\\
&=\tiny\left\{\frac{(1+t)(\theta-1)^2}{\beta\theta\sigma_{A_p}^+}-\left[1-(1+t)(\theta-1)^2\right]\frac{c_1}{2}\right\}
\|A_p^*\Delta\lambda_{k-1}\|^2+\left(\frac{1}{\beta\theta\sigma_{A_p}^+}+\frac{c_1}{2}\right)\left(1+\frac{1}{t}\right)\theta^2\|u_k\|^2.
\end{align*}
}
Using the above expression with $t= -1+1/|\theta-1|$ and noting that $t>0$ in view of the assumption that $\theta \in (0,2)$, we
conclude that
\begin{align*}
\Theta^k_\lambda &\leq \left[\frac{1}{\beta\theta\sigma_{A_p}^+}|\theta-1|-\left(1-|\theta-1| \right)\frac{c_1}{2}\right]\|A_p^*\Delta\lambda_{k-1}\|^2
+\left(\frac{1}{\beta\theta\sigma_{A_p}^+}+\frac{c_1}{2}\right)\frac{\theta^2}{1-|\theta-1|}\|u_k\|^2 \\
&=\frac{1}{\beta\theta\sigma_{A_p}^+}\left(1+\frac{|\theta-1|}{1-|\theta-1|}\right)\frac{\theta^2}{1-|\theta-1|}\|u_k\|^2
\end{align*}
where the last equality is due to \eqref{def:c1}.
Hence, in view of \eqref{def:gamma},  the first inequality of the lemma is proved.

 We now prove the second  inequality in \eqref{eq:theta1uk}. 
Due to  $R_p^0=M_p\Delta x_p^0$, $w_p^k\in {\mathcal{D}_{\R^{n_p}}(m_p,M_p)}$,  assumption {\bf (A2)}, and relation \eqref{def:uk}, we obtain 
\begin{align*}
\|u^k\|^2 & =
\left\|\Delta f_p^k+ \Delta R_p^k\right\|^2\\[2mm]
              &\leq \left[ L_p\|\Delta x_p^k\|+ \sum_{j=1}^{p-1}\beta\| A_p^*A_j\|\left(\|\Delta x_j^k\|+\|\Delta x_j^{k-1}\|\right)+ M_p\left(\|\Delta x_p^k\| +\|\Delta x_p^{k-1}\|\right)\right]^2\\
               &  \leq (p+1) \left[ L_p^2\|\Delta x_p^k\|^2+  \sum_{j=1}^{p-1}\beta^2\| A_p^*A_j\|^2\left(\|\Delta x_j^k\|+\|\Delta x_j^{k-1}\|\right)^2+M_p^2 \left( \|\Delta x_p^k\| +\|\Delta x_p^{k-1}\| \right)^2 \right]
\end{align*}
where  the  inequalities follow from the triangle inequality for norms, definition of $\Delta R_p^k$ in \eqref{def:uk}, and 
the relation $\left(\sum_{i=1}^{l}s_i\right)^2\leq l \left(\sum_{i=1}^l s_i^2\right)$ for $s_i\in \R$, $i=1,\ldots,l$. Hence the proof of  Lemma~\ref{lem:des_theta1uk} follows.
\qed

\section{Proof of Lemma~\ref{prop:decL}(b)}\label{proof_item(b)}
 Note that  due to (a), we just need  to  prove the statement of (b) for $k\geq 1$. Hence,
assume by contradiction that there exists an index $k_0 \ge 0$ such that $ \hat {\mathcal L}_{k_0+1} <v(\beta)$.
Since by (a), $\{\hat {\mathcal L}_k\}$ is decreasing, we  obtain
\[
\sum_{k=1}^j (\hat {\mathcal L}_k - v(\beta)) \le
\sum_{k=1}^{k_0} (\hat {\mathcal L}_k-v(\beta)) + (j-k_0) (\hat {\mathcal L}_{k_0+1}-v(\beta) ) \quad \forall j > k_0,
\]
which implies that 
\[
\lim_{j \to \infty} \sum_{k=1}^j (\hat {\mathcal L}_k - v(\beta)) = - \infty.
\]
On the other hand, it follows from \eqref{lagrangian2}, \eqref{eq:lambda}, \eqref{def:Lhat} and {\bf(A3)} that
\begin{align*}
\hat {\mathcal L}_k& =\mathcal{L}_\beta(x^k,\lambda^k)+\eta_k\ge \mathcal{L}_\beta(x^k,\lambda^k) \\
&= \sum_{i=1}^pf_i(x_i^k)+ \frac{ \beta}{2} \left\| \sum_{i=1}^pA_i x_i^k -b \right\|^2 + \frac{1}{\beta\theta}\inner{ \lambda^k}{\lambda^k-\lambda^{k-1}}\\[2mm]
&\geq v(\beta)+ \frac{1}{2\beta\theta}\left(\|\lambda^k\|^2-\|\lambda^{k-1}\|^2+\|\lambda^k-\lambda^{k-1}\|^2\right)\geq
v(\beta)+ \frac{1}{2\beta\theta}\left(\|\lambda^k\|^2-\|\lambda^{k-1}\|^2\right)
\end{align*}
and hence that
\[
\sum_{k=1}^j ( \hat {\mathcal L}_k -v(\beta) ) \ge \frac{1}{2\beta\theta}\left(\|\lambda^j\|^2-\|\lambda^0\|^2\right)\geq -\frac{1}{2\beta\theta}\|\lambda^0\|^2 \quad \forall j \ge 1,
\]
which yields the desired contradiction.

\end{document}